\renewcommand\nompreamble{\begin{multicols}{2}}
\renewcommand\nompostamble{\end{multicols}}
\numberwithin{equation}{section}%
\theoremstyle{definition}
\newtheorem{definition}{Definition}[section]
\theoremstyle{theorem}
\newtheorem{theorem}{Theorem}[section]
\title{\textbf{Simultaneous identification of the parameters in the plasticity function for power hardening materials : A Bayesian approach}}
\author{ \, Salih Tatar \thanks{ Salih Tatar, Corresponding Author \hfil\break
Department of Mathematics $\&$  Computer Science, College of Science and General Studies\newline
Alfaisal University, Riyadh, KSA,\hfil\break
E-Mail : statar@alfaisal.edu} \, and 
Mohamed BenSalah\thanks{ Mohamed BenSalah,\hfil\break
Department of Computer Science, ISSAT of Sousse,\newline
University of Sousse, Rue Tahar Ben Achour, Sousse 4003, Tunisia\hfil\break
E-mail : mohamed.bensalah@fsm.rnu.tn}}
\begin{document}
\maketitle
\begin{abstract}
In this paper, we study simultaneous determination of the strain hardening exponent, the shear modulus and the yield stress in an inverse problem. First, we analyze the direct and the inverse problems. Then we formulate the inverse problem in the Bayesian framework. After solving the direct problem by an iterative approach,  we propose a numerical method based on a Bayesian approach for the numerical solution of the inverse problem. Numerical examples with noisy data illustrate applicability and accuracy of the proposed method to some extent.\\

\noindent \textbf{Keywords:} Bayesian inverse problems; iterative regularizing ensemble Kalman method; elastoplastic deflections; Ramberg-Osgood curve\\

\noindent \textbf{MSC 2010:} 35R30; 35J60; 65N21; 35Q74; 62F15
\end{abstract}

\nomenclature{$E:$}{Young’s modulus} 
\nomenclature{$\text{meas}(\Omega):$}{measure of the domain $\Omega$}
\nomenclature{$g:$}{plasticity function}
\nomenclature{$\nu:$}{Poisson coefficient}  \nomenclature{$\Omega:$}{cross section of a bar}
\nomenclature{$\partial \Omega:$}{boundary of $\Omega$}  \nomenclature{$\varphi:$}{angle of twist per unit length}
\nomenclature{$\xi_0^2:$}{yield stress} \nomenclature{$u(x,y):$}{Prandtl stress function}
\nomenclature{$G:$}{modulus of rigidity (shear modulus)} \nomenclature{$\xi^2:$}{stress intensity}
\nomenclature{$\mathcal{T}(\varphi):$}{theoretical value of the torque}
\nomenclature{$L^2(\Omega):$}{set of square integrable functions on $\Omega$}
\nomenclature{$\Delta_x:$}{mesh step in $x$ direction}
\nomenclature{$\Delta_y:$}{mesh step in $y$ direction}
\nomenclature{$\kappa:$}{strain hardening exponent}
\nomenclature{$\nabla:$}{gradient}
\nomenclature{$\mathbb{F}:$}{observation operator}
\nomenclature{$C:$}{variance matrix}
\nomenclature{$I:$}{identity matrix}
\nomenclature{$\varepsilon:$}{Gaussian noise}
\nomenclature{$N_e:$}{number of ensemble members}
\nomenclature{$d:$}{observed data}
\nomenclature{$\mathbb{P}:$}{probability operator}
\nomenclature{$\mathbb{P}(\cdot \vert \cdot):$}{Conditional probability}
\nomenclature{$\overline{\epsilon}:$}{tolerance parameter}
\nomenclature{$u_{ex}:$}{exact solution}
\nomenclature{$u^\star:$}{approximated solution}
\nomenclature{$e_{\kappa_n}:$}{relative error associated to $\kappa$}
\nomenclature{$e_{\xi_{0,n}}:$}{relative error associated to $\xi_0^2$}
\nomenclature{$e_{G_n}:$}{relative error associated to $G$}
\nomenclature{$e_{n}:$}{global relative error}
\nomenclature{$\theta^\dagger :$}{exact parameters}
\nomenclature{$\mathbb{P}(\theta):$}{prior distribution of the parameter $\theta$}
\nomenclature{$\gamma:$}{search step}
\nomenclature{$\rho:$}{control parameter}
\nomenclature{$\mathbb{G}:$}{set of admissible coefficients}
\nomenclature{$\overline{\theta}:$}{final estimate of the parameter $\theta$}
\nomenclature{$\|\cdot\|_{C[\xi_*^2, \xi^2]}:$}{Norm in the space $C[\xi_*^2, \xi^2]$}
\nomenclature{$C[\xi_*^2, \xi^2]$:}{space of continuous functions on $[\xi_*^2, \xi^2]$.}

\nomenclature{$\mu_0:$}{prior distribution}

\nomenclature{$\{\theta_0^j\}_{j=1}^{N_e}:$}{initial ensemble}
\nomenclature{$R_n:$}{residual at iteration $n$}
\nomenclature{$\delta:$}{noise level}
\nomenclature{$\mathcal{T}^\delta :$}{perturbed observational data}

\nomenclature{$\tau:$}{scaling factor used in the stopping criterion}

\nomenclature{$u^{(n)}:$}{solution at the 
$n-$th iteration}

\nomenclature{$\sigma:$}{standard deviation}
\nomenclature{$N(0,C):$}{Gaussian distribution with mean $0$ and covariance matrix $C$}

\nomenclature{$U(a_1,a_2):$}{uniform distribution over the interval $[a_1,a_2]$}

\nomenclature{$\Vert \cdot \Vert_{H^1}:$}{norm in the Sobolev space $H^1(\Omega)$}

\nomenclature{$\Vert \cdot \Vert:$}{euclidean norm}

\nomenclature{$\Vert \cdot \Vert_{2}:$}{$L^2$ norm}
\nomenclature{$\|\cdot\|:$}{euclidean norm}

\nomenclature{$M:$}{number of measurements}

\nomenclature{$N_x:$}{number of grid points along the 
$x$-axis}
\nomenclature{$N_y:$}{number of grid points along the 
$y$-axis}

\nomenclature{$u_{i,j}:$}{approximation of the solution $u$ at the grid point $(x_i,y_j)$}
\printnomenclature

\section{Introduction}
In this paper, we study an inverse coefficient problem for the following nonlinear boundary value problem that describes the quasi-static mathematical model of the elasto-plastic torsion of a strain hardening bar \cite{kachanov}: 

\begin{eqnarray}\label{govequation}
\left \{ \begin{array}{l}
-\nabla.\bigg(g(\vert \nabla u \vert^2) \nabla u \bigg)= 2\varphi, \quad (x,y)\in\Omega\subset \mathbb{R}^2, \\
u(x,y)=0, \quad  \quad  \quad  \quad  \quad  \quad  \quad  (x,y) \in \partial \Omega,
\end{array} \right.
\end{eqnarray}
where $\Omega:=(0,a)\times (0,b)$, $a, b > 0$ denotes the cross section of a cylindrical bar with piecewise smooth boundary $\partial \Omega$, $\varphi$ is the angle of twist per unit length, $|\nabla u|^{2}$ represents the stress intensity,  $g=g(\xi^2)$, $\xi^{2}=\vert \nabla u \vert^{2}$ is the plasticity function (since the relationship between the intensities of the shear stress and shear strain is given by the equation $S = g(\xi^2)\xi$, we use the notation $g(\xi^2)$ instead of $g(\xi)$ for plasticity function)  and $u(x,y)$ is the Prandtl stress function. According to deformation theory of plasticity, the plasticity function $g=g(\xi^2)$ satisfies the following conditions \cite{LA}: 
\begin{eqnarray}\label{S1-3}
\left\{ \begin{array}{lc}
c_1\leq g(\xi^2) \leq c_2,\\
c_1 \leq g(\xi^2) + 2g'(\xi^2) \xi^2 \leq c_3, \forall \xi^2 \in \big [{\xi_*}^2,{\xi^*}^2 \big],\\
g'(\xi^2)\geq 0,  \\
\exists \xi_0^2 \in ({\xi_*}^2,{\xi^*}^2), {\xi_*}^2  > 0:~g(\xi^2)=\frac{1}{G},~\forall \xi^2 \in \big [\xi_0^2, {\xi^*}^2 \big],
\end{array}\right.
\end{eqnarray}
where, $c_1,\,c_2,\, c_3$ are some positive constans, $ \xi_0^2 =\max_{(x,y) \in \Omega} \vert \nabla u(x, y)\vert^2$  is the yield stress which is the maximum stress or force per unit area within a material that can arise before the onset of permanent deformation. When stresses up to the yield stress are removed, the material resumes its original size and shape. In other words, there is a temporary shape change that is self-reversing after the force is removed, so that the object returns to its original shape. This kind of deformation is called pure elastic deformation. On the other hand, irreversible deformations are permanent even after stresses have been removed. One type of irreversible deformation is pure plastic deformation. For such materials, the yield stress marks the end of the elastic behavior and the beginning of the plastic behavior. In addition, $1/G$ is shear compliance, $G=E/(2(1+\nu))$ is the modulus of rigidity (shear modulus), $E>0$ is the Young's modulus,  $\nu \in (0,0.5)$ is the Poisson coefficient which is assumed to be 0.3 throughout this paper. For any angle $\varphi>0$, all points of the bar have non-zero stress intensity which means the condition ${\xi_*}^2 > 0$ in \eqref{S1-3} makes sense. It is also known that in order for the equation in (\ref{govequation}) to be elliptic, the first two conditions in (\ref{S1-3}) are needed. Furthermore, the last condition in (\ref{S1-3}) also makes sense because the elastic deformations precede the plastic ones. A set  $\mathbb{G}$  satisfying the conditions in (\ref{S1-3}) is called the class of admissible coefficients in optimal control and inverse problems theory. For many engineering materials, the function  $g(\xi^2)$ has the following form:
\begin{eqnarray}\label{curve}
g(\xi^2) = \left \{ \begin{array}{ll}
1/G, \quad \xi^2 \leq \xi_0^2,  \\
1/G~\left (\xi^2 / \xi_0^2 \right )^{0.5(1-\kappa)}, \quad  \xi_0^2 < \xi^2,
\end{array} \right .
\end{eqnarray}
which corresponds to the Ramberg-Osgood curve \cite{mamedov1, mamedov2, wei}. In (\ref{curve}), $ \kappa \in [0,1]$ is the strain hardening exponent. The values $\kappa=1$ and $\kappa=0$ correspond to pure elastic and pure plastic cases, respectively. When $\xi^2 > \xi_0^2$, the material is said to be undergoing plastic deformation, so large amounts of twisting can cause the beam to be permanently deformed. 
  Evidently, the function given by (\ref{curve}) is in the class of admissible coefficients $\mathbb{G}$. \\

For a given function $g(\xi^2)$ and the angle $\varphi$, the problem (\ref{govequation})  is called the direct (forward) problem. Like most direct problems of the mathematical physics, the direct problem (\ref{govequation}) is well-posed, i.e., the solution exists, unique and depends continuously on the input data, \cite{salih1, salih2, hasanov1}. \\

Next we define the inverse problem. The inverse problem consists of determining the pair of functions $\big \{ u(x,y), g(\xi^2) \big \}$ from the following nonlinear problem:
\begin{eqnarray}\label{inverseproblem}
\left \{ \begin{array}{l}
-\nabla.\bigg(g(\vert \nabla u \vert^2) \nabla u \bigg)= 2\varphi, \quad (x,y)\in\Omega\subset \mathbb{R}^2, \\
u(x,y)=0, \quad  \quad  \quad  \quad  \quad  \quad  \quad  (x,y) \in \partial \Omega, \\
2\int \limits_\Omega{u(x,y;\varphi_i)dx\,dy}=\mathcal{T}_i, ~~~i=1, \cdots, M,
\end{array} \right.
\end{eqnarray}
where 
\begin{eqnarray}\label{outputdata}
\mathcal{T}_i:= \mathcal{T} (\varphi_i ) = 2\int \limits_{\Omega} u(x, y ;\varphi_i)dx\,dy,
\end{eqnarray}
are the measured values of the torque (measured output data) corresponding to the angles $\varphi_i$, $i= 1, \cdots, M$ and $M > 1$ is the number of measurements. The inverse problem (\ref{inverseproblem}) has been studied in some papers both theoretically and numerically. For instance, in \cite{salih2}, a numerical reconstruction algorithm based on parametrization of the unknown coefficient $g(\xi^2)$  is proposed for the inverse problem.  The parametrization algorithm consists of discretization of the unknown curve (\ref{curve})  in the following form \cite{salih2}: 
\begin{eqnarray}\label{discretiazition}
g_h(\xi^2)=\left \{ \begin{array}{l} 
\beta_0=1/G,    \quad \quad \xi^2 \in \big (0,{\xi_0}^2 \big],\\
\beta_0 - \beta_1 \big(\xi^2-{\xi_0}^2 \big), \quad \quad \xi^2 \in \big ({\xi_0}^2, {\xi_1}^2 \big],\\
\beta_0-\sum_{i=1}^{M-1}\beta_i \big({\xi_i}^2 - {\xi_{i-1}}^2 \big) - \beta_M \big(\xi^2-{\xi_{M-1}}^2 \big),  \quad \xi^2 \in 
\big({\xi_{i-1}}^2, {\xi_{i}}^2 \big].
\end{array} \right.
\end{eqnarray}

The parametrization algorithm focuses on the unknown parameters $\beta_i$ appearing in (\ref{discretiazition}). At each $i$th state, the algorithm aims to find the parameters $\beta_i$, using the measured output data $\mathcal{T}_i$ that corresponds to the angle $\varphi_i$. Although the parametrization algorithm is used for numerical solution of some class of inverse coefficients problems, it has some disadvantages. The first one is that the application of this algorithm requires lots of measured output data. This is of course undesirable since getting these data needs time and costs. The second is the ill-posedness of the algorithm. This situation is illustrated in \cite{salih2} and a regularization method is offered. Based on those points a new method i.e., the semi-analytic inversion method is developed in \cite{salih3}. The semi-analytic inversion method is based on the determination of the three main unknowns $G,\, \xi_0^2, \, \kappa $, but not simultaneously. The first distinguishable feature of this algorithm is that it uses only a few values of the measured output data $(\varphi_i, \mathcal{T}_i)$. This method determines the unknown curve completely by using these a few data. The second distinguishable feature of this algorithm is that it is well-posed. In the semi-analytic inversion method, the algorithm for the determination of the yield stress is a bit complicated. A modification in this algorithm is suggested in \cite{salih4}  to deal with this issue. The existence of the solution to the inverse problem (\ref{inverseproblem}) is proved in \cite{salih5}. The paper \cite{salih6} is devoted to simultaneous determination of the parameters in (\ref{curve}), however, the problem considered in this paper is a parabolic problem and the numerical method given is based on discretization of
the minimization problem, steepest descent and least squares method. To the best of authors'  knowledge, there is no work deals with determination of the parameters in (\ref{curve})   simultaneously from the inverse problem (\ref{inverseproblem}).  In this paper,  we propose a numerical method based on a Bayesian approach for the determination of the unknown parameters in (\ref{curve}) simultaneously by using a few values of the measure output  data. This study can be regarded as continuation of the series of the works mentioned above.\\

This paper is organized as follows: In the next section, we prove that the corresponding Bayesian inverse problem is well-defined based on a proof of the continuity of the input-output map and that the posterior distribution depends continuously on the data. In Section \ref{bayesion}, we introduce the Bayesian method for estimating the unknown parameters in the considered inverse problem. Some numerical examples are given to show the efficiency of the method in section \ref{numerical}. 

\section{Analysis of the direct and the inverse problems}\label{analysis}

\begin{definition}
The weak solution $u \in \mathring{H}^{1}(\Omega)$ of the nonlinear boundary value problem (\ref{govequation}) is defined as the solution of the following problem: 
\begin{eqnarray}\label{govequationfunctional}
a(u;u,v)=l(v),\quad \forall v \in \mathring{H}^{1}(\Omega),
\end{eqnarray}
where the nonlinear and linear functionals $a(u;v,w)$ and $l(v)$ are defined by
\begin{eqnarray*}
a(u;v,w) =  \int \limits_{\Omega}   g\big(\vert\nabla u\vert^2 \big)\nabla v \nabla w \, dx \, dy, \quad 
l(v) = 2 \varphi \int \limits_{\Omega} v \, dx \, dy, \quad v \in \mathring{H}^{1}(\Omega),
\end{eqnarray*}
respectively, where $\mathring{H}^{1}(\Omega ):= \big \{ v \in H^1(\Omega ): ~ v(x)=0, ~x\in \partial \Omega \big \}$ and $H^1(\Omega )$ are the Sobolev spaces of functions. 
\end{definition}
It is proved in  \cite{hasanov1} that the direct problem (\ref{govequation}) has a unique solution $u \in \mathring{H}^{1}(\Omega)$. We can linearize the problem (\ref{govequationfunctional})  as follows by using the monotone iteration scheme introduced in \cite{hasanov1}  : 
\begin{eqnarray}\label{linearization}
a\big(u^{(n-1)};u^{(n)},v\big)=l(v),\quad \forall v \in \mathring{H}^{1}(\Omega).
\end{eqnarray}
The following theorem states that the solution of the linearized problem (\ref{linearization}) converges to the unique solution of the problem (\ref{govequationfunctional}), \cite{hasanov1, hasanov2}. 

\begin{theorem}
Let the conditions in (\ref{S1-3}) hold. Then the approximate solution $u^{ (n)} \in \mathring{H}^{1}(\Omega)$, defined by the iteration scheme \eqref{linearization}, of the nonlinear problem (\ref{govequationfunctional}) converges to unique exact solution $u\in \mathring{H}^{1}(\Omega)$ of the problem (\ref{govequationfunctional})   in $H^1$ norm, as $n \to \infty$. 
\end{theorem}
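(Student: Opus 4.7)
The plan is to combine three ingredients: well-posedness of each linearized problem, a uniform a priori estimate in $H^1$, and a monotonicity-based convergence argument. First I would verify that for each fixed $u^{(n-1)} \in \mathring{H}^1(\Omega)$ the linear problem (\ref{linearization}) admits a unique solution $u^{(n)} \in \mathring{H}^1(\Omega)$. Indeed, by the first inequality in (\ref{S1-3}) the bilinear form $(v,w)\mapsto \int_\Omega g(|\nabla u^{(n-1)}|^2)\,\nabla v\cdot\nabla w\,dx\,dy$ is continuous with constant $c_2$ and coercive with constant $c_1$ on $\mathring{H}^1(\Omega)$, the Poincar\'e inequality promoting the $H^1$-seminorm to the full norm. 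Lax--Milgram then applies, and testing (\ref{linearization}) with $v = u^{(n)}$ yields the uniform a priori bound $\|u^{(n)}\|_{H^1(\Omega)} \leq C(\varphi,c_1,\Omega)$, independent of $n$, so $\{u^{(n)}\}$ is bounded and hence weakly relatively compact in $\mathring{H}^1(\Omega)$.

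Next I would exploit the monotone structure encoded in (\ref{S1-3}). Setting $F(\xi)=g(|\xi|^2)\xi$, the Jacobian $DF(\zeta)=g(|\zeta|^2)I+2g'(|\zeta|^2)\,\zeta\otimes\zeta$ has eigenvalues in $[c_1,c_3]$ by the second line of (\ref{S1-3}), which by integration along the segment from $\eta$ to $\xi$ delivers the pointwise bounds $c_1|\xi-\eta|^2\le (F(\xi)-F(\eta))\cdot(\xi-\eta)\le c_3|\xi-\eta|^2$. Subtracting (\ref{govequationfunctional}) from (\ref{linearization}) and testing with $v=u^{(n)}-u$, one obtains the identity
\[
\int_\Omega g(|\nabla u^{(n-1)}|^2)\,|\nabla(u^{(n)}-u)|^2\,dx\,dy = \int_\Omega \bigl[g(|\nabla u|^2)-g(|\nabla u^{(n-1)}|^2)\bigr]\,\nabla u\cdot\nabla(u^{(n)}-u)\,dx\,dy,
\]
whose left-hand side is at least $c_1\|\nabla(u^{(n)}-u)\|_{L^2}^2$. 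Combining this with the Lipschitz behavior of $s\mapsto g(s)$ on the range of $|\nabla u|^2$ and with the uniform estimate from the first step, Cauchy--Schwarz would produce a recursion $\|\nabla(u^{(n)}-u)\|_{L^2}\le q\,\|\nabla(u^{(n-1)}-u)\|_{L^2}$ with $q<1$, yielding geometric $H^1$-convergence.

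The hard part, I expect, is making the contraction constant $q<1$ quantitative: the hypotheses in (\ref{S1-3}) bound $g+2g'\xi^2$ but not $g'$ alone, so a naive Lipschitz estimate on $g$ is unavailable without invoking a uniform upper bound on $|\nabla u|$ drawn from the regularity of the exact solution in \cite{hasanov1}. If this quantitative route proves awkward, a cleaner fallback is the Minty--Browder monotonicity argument: extract a weak limit $u^\ast$ of $\{u^{(n)}\}$ using the a priori bound, pass to the limit in (\ref{linearization}) with test functions of the form $v=u^\ast-w$, use the monotonicity of $F$ to identify $u^\ast$ as the weak solution of (\ref{govequationfunctional}), and invoke uniqueness to conclude that the whole sequence (not merely a subsequence) converges. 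Strong $H^1$-convergence is then recovered by testing the difference equation with $u^{(n)}-u$ and applying coercivity once more.
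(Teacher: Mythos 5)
First, note that the paper does not prove this theorem itself: it is quoted from \cite{hasanov1, hasanov2}, where the argument is a convexity (energy-descent) argument for monotone potential operators. One introduces the potential $J(v)=\tfrac12\int_\Omega\Phi(|\nabla v|^2)\,dx\,dy-l(v)$ with $\Phi(t)=\int_0^t g(s)\,ds$, whose G\^{a}teaux derivative is the nonlinear operator in (\ref{govequationfunctional}); the conditions (\ref{S1-3}) make $J$ strictly convex and coercive, and the key lemma is that the frozen-coefficient iterates of (\ref{linearization}) satisfy $J(u^{(n)})-J(u^{(n+1)})\ge \tfrac{c_1}{2}\,\|\nabla(u^{(n)}-u^{(n+1)})\|_2^2$, so $J(u^{(n)})$ is decreasing and bounded below, hence $\|u^{(n)}-u^{(n+1)}\|_{H^1}\to 0$, and strong monotonicity then upgrades this to $\|u^{(n)}-u\|_{H^1}\to 0$. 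Your first step (Lax--Milgram plus the uniform a priori bound) is correct and is also part of that argument, but neither of your two proposed routes to convergence closes.

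The contraction route fails for the reason you yourself flag, and the obstruction is not merely quantitative: (\ref{S1-3}) controls $g+2g'\xi^2$ and the sign of $g'$ but provides no Lipschitz constant for $g$, and for the Ramberg--Osgood coefficient (\ref{curve}) with $\kappa<1$ there is no smallness that would force $q<1$; a geometric rate is simply not what this scheme delivers in general. The Minty--Browder fallback also does not apply as stated, because (\ref{linearization}) is a frozen-coefficient scheme, not the equation $A(u^{(n)})=l$: to pass to the limit in $\int_\Omega g(|\nabla u^{(n-1)}|^2)\,\nabla u^{(n)}\cdot\nabla v\,dx\,dy$ you need $g(|\nabla u^{(n-1)}|^2)$ to converge pointwise or strongly, whereas the a priori bound only yields weak convergence of $\nabla u^{(n-1)}$, under which $g(|\nabla u^{(n-1)}|^2)$ need not converge to $g(|\nabla u^*|^2)$. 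The standard Minty inequality couples the operator at the same argument in both slots, while the iteration decouples them, so the trick cannot identify the weak limit. The missing idea is precisely the descent property of the potential $J$ along the iteration, which is what the cited proof uses to obtain $\|u^{(n)}-u^{(n+1)}\|_{H^1}\to 0$ without any Lipschitz assumption on $g$ and without passing to limits in the frozen coefficient.
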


Next we reformulate the inverse problem (\ref{inverseproblem}). We denote by $u(x, y ; g ; \varphi)$ the solution to the direct problem (\ref{govequation}) for a given $g \in \mathbb{G}$ and $\varphi \in [\varphi_*, \varphi^*], \, \varphi_* > 0$. Then the inverse problem (\ref{inverseproblem}) can be defined as a solution of the following nonlinear functional equation:
\begin{eqnarray}\label{functionalequation}
2\int \limits_{\Omega} u(x, y ; g ; \varphi)dx\,dy =  \mathcal{T} (\varphi), \, g \in \mathbb{G}, \, \varphi \in [\varphi_*, \varphi^*].
\end{eqnarray}
By the definition (\ref{outputdata}), we define the input-output map $T : \mathbb{G} \to \bold {T} $ from the class of admissible coefficients to the class of output functions $T (\varphi) \in \bold {T} $. The we can reformulate the inverse problem (\ref{inverseproblem}) in terms of input-output map as the following nonlinear operator equation : 
\begin{eqnarray}\label{inputoutputequation}
 T(g) = \mathcal{T}, g \in \mathbb{G}.
\end{eqnarray}
We conclude from (\ref{inputoutputequation}) that the inverse problem considered here can be reduced into inverting the input-output map $T : \mathbb{G} \to \bold {T}$. The invertibility and thus existence of the solution to the inverse problem is studied in  \cite{salih5}. \\

The following theorem states that the input-output map defined by (\ref{inputoutputequation}) is continuous. 
\begin{theorem}\label{imp}
Let $u_1(x, y):= u(x, y ; g_1 ; \varphi)$ and $u_2(x, y):= u(x, y ; g_2 ; \varphi)$ be the solutions to the direct problem (\ref{govequation}) for given $g_ 1 \in \mathbb{G}$  and $g_ 2 \in \mathbb{G}$, respectively. Then the input-output map defined by (\ref{inputoutputequation}) is continuous and the following estimates holds:
\begin{eqnarray}\label{mapcontiniuty}
\Vert T(g_1) - T(g_2) \Vert_ {C[{\xi_*}^2 , {\xi^*}^2]}  \leq 2 \, C \,\Vert \nabla u_2\Vert_2 \, \Vert g_1 - g_2 \Vert _{C[{\xi_*}^2 , {\xi^*}^2]},
\end{eqnarray}
where $C > 0$ is a constant, $\Vert . \Vert_ {C({\xi_*}^2 , {\xi^*}^2)}$ is the maximum norm in the space of continuous functions and $\Vert . \Vert_ 2$ is the usual  $L_2$ norm. 
\end{theorem}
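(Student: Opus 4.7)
The plan is to derive the continuity estimate by comparing the weak formulations for the two coefficients and exploiting the strong monotonicity of the nonlinear operator $p \mapsto g(|p|^2)p$ that is encoded in the second line of the admissibility conditions (\ref{S1-3}). Writing out (\ref{govequationfunctional}) for $g_1,u_1$ and $g_2,u_2$ and subtracting gives, for every $v \in \mathring{H}^1(\Omega)$,
\begin{equation*}
\int_\Omega \bigl[g_1(|\nabla u_1|^2)\nabla u_1 - g_2(|\nabla u_2|^2)\nabla u_2\bigr]\cdot \nabla v\, dx\,dy = 0.
\end{equation*}
I would add and subtract $g_1(|\nabla u_2|^2)\nabla u_2$ so as to split the integrand into a ``monotone'' part involving only $g_1$ and a ``coefficient-difference'' part involving $g_1-g_2$ evaluated at $|\nabla u_2|^2$.

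Next, I would test with $v = u_1 - u_2 \in \mathring H^1(\Omega)$. On the left-hand side, the monotone part yields
\begin{equation*}
\int_\Omega \bigl[g_1(|\nabla u_1|^2)\nabla u_1 - g_1(|\nabla u_2|^2)\nabla u_2\bigr]\cdot \nabla(u_1-u_2)\,dx\,dy \;\geq\; c_1\,\|\nabla(u_1-u_2)\|_2^{\,2},
\end{equation*}
where the lower bound comes from the fact that the Jacobian of $p \mapsto g_1(|p|^2)p$ equals $g_1(|p|^2) I + 2 g_1'(|p|^2)\, p\otimes p$, whose spectrum is $\{g_1,\; g_1+2g_1'|p|^2\}$, and both eigenvalues are bounded below by $c_1$ thanks to (\ref{S1-3}). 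The right-hand side contribution is estimated by Cauchy--Schwarz and the sup norm of $g_1-g_2$:
\begin{equation*}
\Bigl|\int_\Omega [g_1(|\nabla u_2|^2)-g_2(|\nabla u_2|^2)]\,\nabla u_2 \cdot \nabla(u_1-u_2)\,dx\,dy\Bigr| \leq \|g_1-g_2\|_{C[\xi_*^2,\xi^{*2}]}\,\|\nabla u_2\|_2\,\|\nabla(u_1-u_2)\|_2.
\end{equation*}
Combining these two inequalities and dividing by $\|\nabla(u_1-u_2)\|_2$ yields the $H^1$-stability bound
\begin{equation*}
\|\nabla(u_1-u_2)\|_2 \;\leq\; \tfrac{1}{c_1}\,\|\nabla u_2\|_2\,\|g_1-g_2\|_{C[\xi_*^2,\xi^{*2}]}.
\end{equation*}

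Finally, since $T(g)(\varphi) - T(g')(\varphi) = 2\int_\Omega (u-u')\,dx\,dy$, Cauchy--Schwarz followed by the Poincaré inequality on $\mathring H^1(\Omega)$ gives
\begin{equation*}
|T(g_1)(\varphi) - T(g_2)(\varphi)| \;\leq\; 2\,\mathrm{meas}(\Omega)^{1/2}\,\|u_1-u_2\|_2 \;\leq\; 2\,C_P\,\mathrm{meas}(\Omega)^{1/2}\,\|\nabla(u_1-u_2)\|_2,
\end{equation*}
and chaining this with the previous display produces the claimed estimate (\ref{mapcontiniuty}) with $C = C_P\,\mathrm{meas}(\Omega)^{1/2}/c_1$; since this bound is uniform in $\varphi \in [\varphi_*,\varphi^*]$, continuity of $T$ follows.

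The main obstacle is justifying the strong monotonicity step: one must verify carefully that at the point $(x,y)$ the argument $|\nabla u|^2$ indeed lies in $[\xi_*^2,\xi^{*2}]$ so that the bounds from (\ref{S1-3}) apply, and that the matrix inequality above transfers to the integral inequality via a standard integration along the segment joining $\nabla u_2$ and $\nabla u_1$. Everything else is a direct application of Cauchy--Schwarz and Poincaré.
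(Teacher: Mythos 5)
Your proposal is correct and follows essentially the same route as the paper: subtract the two weak formulations, test with $u_1-u_2$, apply the strong monotonicity of $p\mapsto g(|p|^2)p$ (which the paper quotes as an inequality from the literature and you justify via the Jacobian $g\,I+2g'\,p\otimes p$ and conditions (\ref{S1-3})) on one side and Cauchy--Schwarz on the coefficient-difference term on the other, then conclude with Cauchy--Schwarz and Poincar\'e on $T(g_1)-T(g_2)$. Your constant $\mathrm{meas}(\Omega)^{1/2}$ in the Cauchy--Schwarz step is in fact the sharper one; otherwise the arguments coincide.
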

\begin{proof}
As $u_1$ and   $u_2$  $\in \mathring{H}^{1}(\Omega)$ are the solutions to (\ref{govequation}), by (\ref{govequationfunctional}), we have: 

\begin{eqnarray}\label{v1}
 \int \limits_{\Omega} g_1\big(\vert\nabla u_1\vert^2 \big)\nabla u_1 \nabla v \,dx\,dy =  2 \varphi  \int \limits_{\Omega}  v \,dx\,dy,
\end{eqnarray}
\begin{eqnarray}\label{v2}
 \int \limits_{\Omega}  g_2\big(\vert\nabla u_2\vert^2 \big)\nabla u_2 \nabla v \,dx\,dy =  2 \varphi  \int \limits_{\Omega} v \,dx\,dy, 
\end{eqnarray}
 $\forall v \in \mathring{H}^{1}(\Omega)$. Then replacing $v$ by $u_2 - u_1$ in  (\ref{v1}) and by $u_1 - u_2$ in (\ref{v2})  and then subtracting the resulting equations, we get: 
 \begin{eqnarray}\label{v3}
 \begin{split}
 \int \limits_{\Omega}  \bigg(g_1\big(\vert\nabla u_1\vert^2 \big) \nabla u_1 &  -  g_1\big(\vert\nabla u_2\vert^2 \nabla u_2 \bigg)   \nabla (u_1 - u_2) \,dx\,dy  =  \\
 &  \int \limits_{\Omega}  \bigg(g_1\big(\vert\nabla u_1\vert^2 \big) \nabla u_1 -  g_1\big(\vert\nabla u_2\vert^2 \nabla u_2 \bigg)   \nabla (u_1 - u_2) \,dx\,dy.
 \end{split}
\end{eqnarray}
By using the following inequality for the functions in $\mathbb{G}$ \cite{Arzualemdar}
 \begin{eqnarray*}
  \left[  g(\xi^2)\xi - g(\tilde{\xi}^2)\tilde{\xi}\right]\cdot(\xi - \tilde{\xi} )\geq c |\xi - \tilde{\xi}|^2 ,
 \end{eqnarray*}
on the left hand side in (\ref{v3}) and Cauchy-Schwarz (CS) inequality on the right hand side in (\ref{v3}), we deduce that
\begin{eqnarray}\label{v4}
c \big \Vert \nabla u_1 - \nabla u_2 \big \Vert_2 \leq \big \Vert   g_2\big(\vert\nabla u_2\vert^2 \big)\nabla u_2 - g_1\big(\vert\nabla u_2\vert^2 \big)\nabla u_2  \big \Vert_2.
\end{eqnarray}
Now we estimate $\vert T(g_1) - T(g_2) \vert$ by using Cauchy-Schwarz (CS) and Poincare (P) inequality as follows: 
\begin{equation}\label{v5}
\begin{split}
\vert T(g_1) - T(g_2) \vert  & \overset{\text{(\ref{outputdata})}} {=}  2 \bigg \vert \int \limits_{\Omega} u(x, y ; g_1 ; \varphi)dx\,dy - \int \limits_{\Omega} u(x, y ; g_2 ; \varphi)dx\,dy \bigg \vert \\
& \leq 2  \int \limits_{\Omega}  \bigg \vert u(x, y ; g_1 ; \varphi) -  u(x, y ; g_2 ; \varphi) \bigg \vert  dx\,dy  \\
& \overset{\text{(CS)}} {\leq} 2 \,  \text{meas} (\Omega) \, \Vert u_1 - u_2 \Vert_2  \overset{\text{(P)}} {\leq} 2\,  \text{meas} (\Omega) \, C_{\Omega} \Vert \nabla (u_1 - u_2)\Vert_2,
\end{split}
\end{equation}
where $C_{\Omega} > 0$ is the Poincare constant. Taking into account (\ref{v4}) in (\ref{v5}), we have (\ref{mapcontiniuty}) where $C = \frac {2\,  \text{meas} (\Omega) C_{\Omega}  }{c}$. 
\end{proof}

The following theorem is straightforward with Theorem \ref{imp} and Theorems 1.1 and 1.2 in \cite{Trillos}. 

\begin{theorem}
If the prior $\mu_0$ is any measure with $\mu_0 {(X)} = 1$, then the Bayesian inverse problem of recovering $\theta=(\kappa, \xi_0^2, G)$ from the data $ d = \mathbb{F}(\theta) + \varepsilon$ is well-defined and well-posed.
\end{theorem}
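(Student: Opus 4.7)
The plan is to reduce the statement to the abstract Bayesian well-posedness theorems of \cite{Trillos}, whose hypotheses require a Polish parameter space, a probability prior $\mu_0$, a Gaussian noise model $\varepsilon\sim N(0,C)$ with $C$ positive definite, and a continuous (or $\mu_0$-almost surely continuous) forward/observation map $\mathbb{F}$. Under those hypotheses, Theorem~1.1 of \cite{Trillos} produces a unique posterior $\mu^d$ absolutely continuous with respect to $\mu_0$ via Bayes' rule (this is the well-defined part), and Theorem~1.2 yields Hellinger-continuity of $d\mapsto\mu^d$ (the well-posed part). The proof thus reduces to verifying that the forward map
\[
\mathbb{F}(\theta) = \bigl(T(g_\theta)(\varphi_1),\ldots,T(g_\theta)(\varphi_M)\bigr)
\]
is continuous on the closed, bounded set $X\subset\mathbb{R}^3$ of those $\theta=(\kappa,\xi_0^2,G)$ for which the Ramberg--Osgood function $g_\theta$ defined by (\ref{curve}) belongs to the admissible class $\mathbb{G}$. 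Since such an $X$ is Polish, no further measurability assumption on $\mu_0$ is needed.

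The continuity of $\mathbb{F}$ factors naturally as $\theta\mapsto g_\theta \mapsto T(g_\theta)$. The second map $g\mapsto T(g)$ is Lipschitz from $C[{\xi_*}^2,{\xi^*}^2]$ into $\mathbb{R}^M$ componentwise, by Theorem~\ref{imp}; the constant $2C\,\|\nabla u_2\|_2$ can be bounded uniformly over $g\in\mathbb{G}$ using the standard a priori $H^1$ estimate for the direct problem (\ref{govequation}), which depends only on the ellipticity constants in (\ref{S1-3}) and on $\varphi$. For the first map, the explicit formula (\ref{curve}) shows that on the admissible parameter ranges the two branches $1/G$ and $(1/G)(\xi^2/\xi_0^2)^{0.5(1-\kappa)}$ are jointly continuous in $(\theta,\xi^2)$ and agree at the breakpoint $\xi^2=\xi_0^2$. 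Compactness of $X\times[{\xi_*}^2,{\xi^*}^2]$ then upgrades pointwise continuity of $\theta\mapsto g_\theta(\cdot)$ to continuity in the uniform norm, and the composition gives continuity of $\mathbb{F}$ on $X$.

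With continuity of $\mathbb{F}$ secured, the negative log-likelihood $\Phi(\theta;d)=\tfrac12\|C^{-1/2}(d-\mathbb{F}(\theta))\|^2$ is continuous in $\theta$, measurable in $d$, and locally bounded; these are precisely the conditions of Theorems~1.1 and~1.2 of \cite{Trillos}, so invoking them completes the proof. The main technical obstacle is handling the moving breakpoint $\xi^2=\xi_0^2$ as $\xi_0^2$ varies with $\theta$: for two nearby parameters $\theta$ and $\tilde\theta$ the two pieces of the piecewise-defined $g_\theta$ and $g_{\tilde\theta}$ are defined on different subintervals, so I would split $[{\xi_*}^2,{\xi^*}^2]$ into the regions determined by $\xi_0^2$ and $\tilde\xi_0^2$ and use the agreement of the branches at their respective breakpoints, together with the elementary Lipschitz dependence of $(\xi^2/\xi_0^2)^{0.5(1-\kappa)}$ on $(\kappa,\xi_0^2,G)$ on the admissible compact set, to absorb the cross-terms by the triangle inequality. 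Once this kink estimate is written down, every remaining step is a direct invocation of the abstract framework.
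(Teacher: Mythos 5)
Your proposal is correct and follows essentially the same route as the paper, which gives no written proof beyond the remark that the theorem ``is straightforward with Theorem \ref{imp} and Theorems 1.1 and 1.2 in \cite{Trillos}.'' In fact you supply more than the paper does: the paper's Theorem \ref{imp} only gives continuity of $g \mapsto T(g)$, and your additional verification that $\theta \mapsto g_\theta$ is continuous in the uniform norm (including the treatment of the moving breakpoint at $\xi^2 = \xi_0^2$) is a genuine step that the paper leaves implicit.
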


\section{Iterative regularizing ensemble Kalman method}\label{bayesion}
In this section, we propose a numerical technique for recovering the plasticity function in the inverse problem (\ref{inverseproblem}) by using the measured output data $\mathcal{T}(\varphi)$ defined by (\ref{outputdata}). In practice, the measured data  can only be given with some measurement error leading to inaccuracies in the numerical solution of the inverse problem. To address this challenge, some regularization methods are considered. A modified optimal perturbation algorithm  \cite{li2013simultaneous} was introduced to regularize the ill-posed problems. While this method provides point estimates for the parameters, our interest extends beyond more point estimations, we seek to understand their statistical properties. Bayesian inference offers an exact solution to this requirement by incorporating uncertainties from observational noise and prior information, yielding the posterior probability density of the unknown parameters. For a comprehensive understanding of the well-posedness and underlying theory of this approach, one can refer to \cite{chada2018parameterizations,cotter2010approximation,dashti2011besov,iglesias2016regularizing,iglesias2015filter,stuart2010inverse}. Furthermore, Yan et al. in \cite{yan2017convergence,yan2019adaptive,yan2019adaptive1} and Stuart et al. in \cite{dashti2011uncertainty} have explored various numerical methods for solving Bayesian inverse problems. We begin by establishing some notation. Let $\theta=(\kappa, \xi_0^2, G)$ represent the discrete input, and define the observation operator $\mathbb{F}$ as follows:
$$\mathcal{T}=\mathbb{F}(\theta).$$ Let $\varepsilon$ denote additive Gaussian noise for observations, i.e., $\varepsilon \sim N(0, C)$, where $C$ is a variance matrix represented by $C = \sigma^2 I$, where $I$ is the identity matrix and $\sigma$ represents the standard deviation. Consequently, the observation data can be expressed as:
\begin{equation} \label{mes-per}
 d = \mathbb{F}(\theta) + \varepsilon.  
\end{equation} 
Here, we assume that $\varepsilon$ is independent of $\theta$. In the Bayesian context, both $\theta$ and $d$ are treated as random variables. Bayes' rule yields the posterior probability density, given by:
\begin{equation}\label{prob}
 \mathbb{P}(\theta|d) \propto \mathbb{P}(d|\theta)\mathbb{P}(\theta)   
\end{equation} 

Here, $\mathbb{P}(\theta) = \mathbb{P}(\kappa) \otimes \mathbb{P}(\xi_0^2) \otimes \mathbb{P}(G)$ represents the prior distribution before observing the data, where $\mathbb{P}(\kappa) = U(a, b)$, $\mathbb{P}(\xi_0^2) = U(c, d)$ and $\mathbb{P}(G) = U(e, f)$. Additionally, $\mathbb{P}(d|\theta)$ denotes the likelihood defined by:
\begin{equation*}
  \mathbb{P}(d|\theta) \propto \exp\left( -\frac{1}{2} \|C^{-\frac{1}{2}}(d - \mathbb{F}(\theta))\|^2 \right).  
\end{equation*} 
Obtaining the posterior distribution marks the initial stage of the Bayesian inverse problem. The subsequent challenge lies in extracting the desired information from the posterior probability density. Typically, statistical information from the posterior density is derived through sampling methods. A commonly used approach for sampling the posterior distribution is based on Markov Chain Monte Carlo (MCMC). However, this approach often demands a large number of evaluations for solving the forward model, which can be impractical for large-scale problems. Therefore, it may be infeasible to fully exploit the information provided by the posterior probability density function. Nevertheless, sampling can serve as a valuable tool for benchmarking and assessing methods that are feasible for practical application. This holds true for ensemble methods, which have emerged as an excellent approach for capturing features of the posterior distribution $\mathbb{P}(\theta|d)$ within a reasonable computing time.\\

In this paper, we extend the iterative regularizing ensemble Kalman method ({\bf IREKM}) introduced in \cite{iglesias2015iterative,zhang2018bayesian} to solve the inverse problem (\ref{inverseproblem}). Algorithm \ref{algo1} summarizes the pseudocode of the {\bf IREKM}. In Algorithm \ref{algo1}, we observe that the ensemble Kalman method offers a derivative-free optimization approach for solving the inverse problem, overcoming the need for severe adjoint problem calculations in gradient-based methods. This characteristic makes it particularly advantageous to deal with the problems where computing gradients is challenging. Consequently, the method has gained significant attention. For a deeper exploration of numerical theory, references \cite{iglesias2015iterative,evensen2018analysis,iglesias2013ensemble} provide valuable insights. While Schillings et al. in \cite{schillings2018convergence} offers a convergence analysis for linear inverse problems, the convergence behavior of {\bf IREKM} remains an open question for nonlinear inverse problems \cite{iglesias2015iterative}. Hence, in this paper, our focus remains on the numerical implementation of the inverse problem considered in this paper. 
\section{Numerical experiments}\label{numerical}
This section is concerned with demonstrating the efficiency and accuracy of Algorithm \ref{algo1}, as outlined in the previous section, in simultaneously identifying the strain hardening exponent \(\kappa\), the yield stress $\xi_0^2$ and the shear modulus \(G\) in the inverse problem (\ref{inverseproblem}) from the observed data \(\mathcal{T}(\varphi)\). First we solve the direct problem, where we generate synthetic data based on known values of $\kappa$, $\xi_0^2$  and \(G\). This allows us to simulate real-world scenarios and evaluate the performance of our algorithm under controlled conditions. By comparing the reconstructed parameters with the ground truth values used to generate the synthetic data, we can assess the accuracy of our identification method. Additionally, we investigate the computational efficiency of Algorithm 1 by analyzing its runtime and memory usage, providing insights into its scalability and applicability to larger-scale problems. Through this comprehensive evaluation, we aim to establish the robustness and reliability of our approach for solving the inverse problem posed by the nonlinear model (\ref{govequation}).

\vspace{0.2cm}
\begin{algorithm}[]
\vspace{0.2cm}
\begin{enumerate}
\item[{\bf 1.}] {\bf Initialisation:}
Generate an initial ensemble of $N_e$ members $\{\theta_0^j\}_{j=1}^{N_e}$ from the prior distribution $\mathbb{P}(\theta)$. Then, for each $j \in \{1, 2,\dots , N_e\}$, let $d_j=d+\zeta^j$ where $\zeta^j \sim N(0, C)$. Set $n=0.$
\item[{\bf 2.}] {\bf Prediction:} Let $\theta_{n}^j = (\kappa_n^j, \xi_{0,n}^{2,j}, G_{n}^j)$, $j=1,\dots,N_e$, $n=0,1,\dots.$ Calculate
$w_{n}^j = \mathbb{F}(\theta_{n}^j),\quad \text{for}\quad j \in \{1, 2, \ldots, N_e\},$ and compute the ensemble mean $$\overline{w}_n = \frac{1}{N_e} \sum_{j=1}^{N_e}w_{n}^j$$
\item[{\bf 3.}] \textbf{Analysis step:} Let 
    \begin{align*}
        C_{n}^{ww} &= \frac{1}{N_e - 1} \sum_{j=1}^{N_e} (\mathbb{F}(\theta_{n}^j) - \overline{w}_n)(\mathbb{F}(\theta_{n}^j) - \overline{w}_n)^T, 
         \\
         C_{n}^{\kappa w} &= \frac{1}{N_e - 1} \sum_{j=1}^{N_e} (\kappa_{n}^j - \overline{\kappa}_{n})(\mathbb{F}(\theta_{n}^j) - \overline{w}_n)^T
         \\
        C_{n}^{\xi_0^2 w} &= \frac{1}{N_e - 1} \sum_{j=1}^{N_e} (\xi_{0,n}^{2,j} - \overline{\xi}_{0,n}^2)(\mathbb{F}(\theta_{n}^j) - \overline{w}_n)^T,\\
        C_{n}^{G w} &= \frac{1}{N_e - 1} \sum_{j=1}^{N_e} (G_{n}^j - \overline{G}_{n})(\mathbb{F}(\theta_{n}^j) - \overline{w}_n)^T.
    \end{align*}
   where $$ \overline{\kappa}_n = \frac{1}{N_e} \sum_{j=1}^{N_e} \kappa_{n}^j, \quad \overline{\xi}_{0,n}^2 = \frac{1}{N_e} \sum_{j=1}^{N_e} \xi_{0,n}^{2,j}, \quad \overline{G}_n = \frac{1}{N_e} \sum_{j=1}^{N_e} G_{n}^j.$$
    Update each ensemble member:
    \begin{align*}
        \kappa_{n+1}^j &= \kappa_{n}^j + C_{n}^{\kappa w} (C_{n}^{ww} + \gamma_n C)^{-1} (d_j - w_{n}^j), \quad j \in \{1, 2, \ldots, N_e\},\\
       \xi_{0,n+1}^{2,j} &= \xi_{0,n}^{2,j} + C_{n}^{\xi_0^2 w} (C_{n}^{ww} + \gamma_n C)^{-1} (d_j - w_{n}^j), \quad j \in \{1, 2, \ldots, N_e\}, \\
        G_{n+1}^j &= G_{n}^j + C_{n}^{G w} (C_{n}^{ww} + \gamma_n C)^{-1} (d_j - w_{n}^j), \quad j \in \{1, 2, \ldots, N_e\}, 
    \end{align*}
    where $\gamma_n$ is chosen as follows:
Let $\gamma_0$ be an initial guess, and $\gamma_n^{i+1}=2^i \gamma_0$. Choose $\gamma_n=\gamma_n^N$
where $N$ is the first integer such that
    \[ \gamma_n^N \|C^{-\frac{1}{2}} (C_{n}^{ww} + \gamma_n^N C)^{-1} (d - \overline{w}_n)\| \geq \rho \|C^{-1} (d - \overline{w}_n)\|, \]
    and $\rho \in (0, 1)$ is a constant.
\item[{\bf 4.}] {\bf Iteration:} Increase $n$ by one and go Step 2, repeat the above procedure until a stopping criterion is satisfied.
\item[{\bf 5.}] {\bf Output:} Take $\overline{\theta}_n = (\overline{\kappa}_n, \overline{\xi}_{0,n}^2, \overline{G}_n)$ as the numerical solution of $\theta= (\kappa,  \xi_0^2 , G)$.

\end{enumerate}
    \caption{\it Iterative Regularizing Ensemble Kalman Method $(${\bf IREKM}$)$}
    \label{algo1}
\end{algorithm}

\subsection{Numerical solution of the direct problem}
This section is devoted to introduce an iterative approach for solving the direct problem (\ref{govequation}). Due to the natural complexity of solving nonlinear models numerically, we will employ a linearization technique as an efficient approach to deal with the challenges posed by nonlinearities. In doing so, we initialize $u^{(0)} \equiv 0$ and for $n = 1,2,\dots$, $u^{(n)}$ represents the solution to the following linear model:
\begin{equation*}
(\mathcal{LP}^{(n)})\left\{\begin{aligned}
-\nabla.(g(|\nabla u^{(n-1)}|)\, \nabla u^{(n)}) & =2\varphi &  \text{in}\quad & \Omega, \\
u^{(n)} & =0 &\text{on}\quad &   \partial \Omega, 
\end{aligned}\right. 
\end{equation*}
As it is mentioned in the section (\ref{analysis}), the solution of the forward problem (\ref{govequation}) can be obtained as the limit of the sequence $\{u^{(n)}\}_{n=0}^\infty$. To address the partial differential equation \(-\text{div}(g(|\nabla u^{(n-1)}|^2) \nabla u^{(n)}) = 2\varphi\) over the simplified computational domain \(\Omega =[0, a] \times [0,b]\), we employ the finite difference method with central difference approximations for the Laplacian and divergence terms. We set up a grid with nodes $(x_i, y_j)$ where $x_i = i \Delta_x$ and $y_j = j \Delta_y$ for $i = 0, \dots, N_x$ and $j = 0, \dots, N_y$. The mesh sizes are determined by $\Delta_x = a/N_x$ and $\Delta_y = b/N_y$. At each grid point \((x_i, y_j)\), the divergence of the gradient of \(u^{(n)}\), influenced by a spatial coefficient \(g(|\nabla u^{(n-1)}|^2)\), is approximated by:
\begin{equation*}
\begin{split}
-\text{div}(g(|\nabla u^{(n-1)}|^2) \nabla u^{(n)}) & \approx -\frac{1}{\Delta_x^2} \left[ \tilde{g}_{i+\frac{1}{2},j} (u_{i+1,j} - u_{i,j}) - \tilde{g}_{i-\frac{1}{2},j} (u_{i,j} - u_{i-1,j}) \right] \\
 & - \frac{1}{\Delta_y^2} \left[ \tilde{g}_{i,j+\frac{1}{2}} (u_{i,j+1} - u_{i,j}) - \tilde{g}_{i,j-\frac{1}{2}} (u_{i,j} - u_{i,j-1})\right],
\end{split}
\end{equation*}
for $i=1,\dots, N_x-1$ and $i=1,\dots, N_y-1$. Here, \( \tilde{g}_{i+\frac{1}{2},j} \), \( \tilde{g}_{i-\frac{1}{2},j} \), \( \tilde{g}_{i,j+\frac{1}{2}} \), and \( \tilde{g}_{i,j-\frac{1}{2}} \) represent the averaged values of \(\tilde{g}(x,y)=g(|\nabla u^{(n-1)}(x,y)|^2)\) at respective cell interfaces to maintain accuracy at the boundaries of each finite element, i.e,
$$ \tilde{g}_{i\pm\frac{1}{2},j}=\frac{\tilde{g}(x_{i\pm 1},y_{j})+\tilde{g}(x_{i},y_j)}{2}\; \text{and}\; \tilde{g}_{i,j\pm 1}=\frac{\tilde{g}(x_{i},y_{j\pm 1})+\tilde{g}(x_{i},y_j)}{2}.$$
Boundary conditions are strictly enforced with \( u_{0,j}^{(n)}=u_{N_x,j}^{(n)}=u_{i,0}^{(n)}=u_{i,N_y}^{(n)}=0 \) for all \( i \) and \( j \), corresponding to \( u^{(n)} = 0 \) on \(\partial \Omega\). The main steps of the proposed iterative process to solve the direct problem ( \ref{govequation}) are summarized in the following algorithm:

\begin{algorithm}[H]
\begin{enumerate}

\vspace{0.2cm}
\item[{\bf Step 1}] Choose a tolerance parameter $\overline{\epsilon}$ and set $n = 0$. Initialize $u^{(0)}$ as $0$.

\item[{\bf Step 2}] Update the solution by solving problem $(\mathcal{LP}^{(n+1)})$ using the finite difference
method to obtain $u^{(n+1)}$.

\item[{\bf Step 3}] Compute $\|u^{(n+1)}-u^{(n)}\|_{H^1(\Omega)}$. If $\|u^{(n+1)}-u^{(n)}\|_{H^1(\Omega)}\leq \overline{\epsilon}$ go to {\bf Step 5}. Otherwise, proceed to the next step.

\item[{\bf Step 4}] Set  $n \rightarrow n + 1$. Go to {\bf Step 2}.

\item[{\bf Step 5}] The approximated solution $u^{\star}$ is  $u^{(n+1)} $.

\end{enumerate}
    \caption{\it Iterative Approach for Solving Linearized Problem}
    \label{solving-direct}
\end{algorithm}
To evaluate the practical effectiveness and accuracy of the Algorithm \ref{solving-direct}, we apply it to solve the direct problem with a known solution. We set the tolerance parameter to be $\overline{\epsilon}=10^{-6}$ and define the computational domain $\Omega$ as the square region $[0, 1] \times [0, 1]$. The mesh sizes are chosen to be $\Delta x=\Delta y=0.02$. The exact solution we consider is $u_{ex}(x,y)=(x-x^2)\, (y-y^2)$ as showed in Figure \ref{exactt}.

\begin{figure}[H]
    \centering
    \includegraphics[width=8cm,height=6cm]{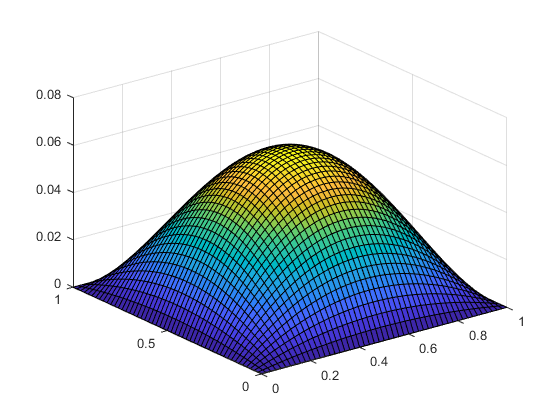}
    \caption{Variation of the exact solution $u_{ex}$}
    \label{exactt}
\end{figure}

\noindent {\bf Test 1:} In this example, we consider the plasticity function as in (\ref{curve}), where $\xi_0^2 = 0.02$ (this value of yield stress is used for soft engineering materials),  $G=42.3$ and strain hardening exponent $\kappa=0.5$. The region $\Omega_1$ denotes the subset of $\Omega$ where $|\nabla u|^2 \leq \xi_0^2$ and  $\Omega_2 = \Omega \setminus \Omega_1 $  (see Figure \ref{regions}). We substitute the exact solution and the plasticity function in the equation $-\nabla.\bigg(g(\vert \nabla u \vert^2) \nabla u \bigg) = F $ and determine the source term accordingly. Figures \ref{plasticity} and \ref{forcing} present the variations of the plasticity function $g(|\nabla u|^2)$ and the source term $F$, respectively.

\begin{figure}[H]
     \centering
     \begin{subfigure}[b]{0.3\textwidth}
         \centering
         \includegraphics[width=\textwidth]{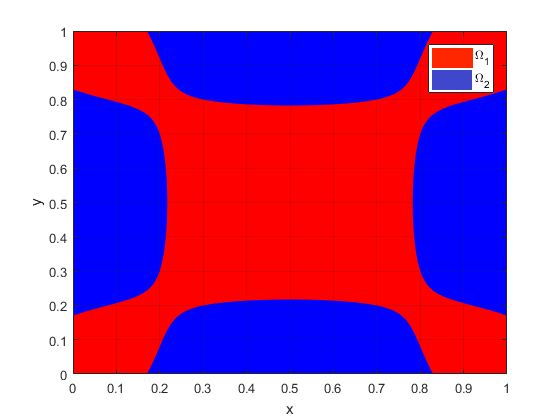}
         \caption{The regions $\Omega_1$ and $\Omega_2$}
         \label{regions}
     \end{subfigure}
     \hfill
     \begin{subfigure}[b]{0.3\textwidth}
         \centering
         \includegraphics[width=\textwidth]{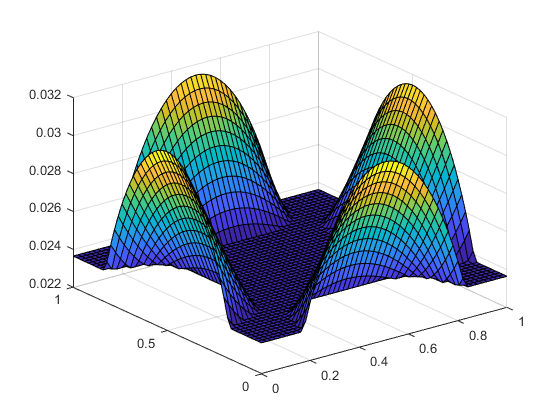}
         \caption{The plasticity function $g$}
         \label{plasticity}
     \end{subfigure}
     \hfill
     \begin{subfigure}[b]{0.3\textwidth}
         \centering
         \includegraphics[width=\textwidth]{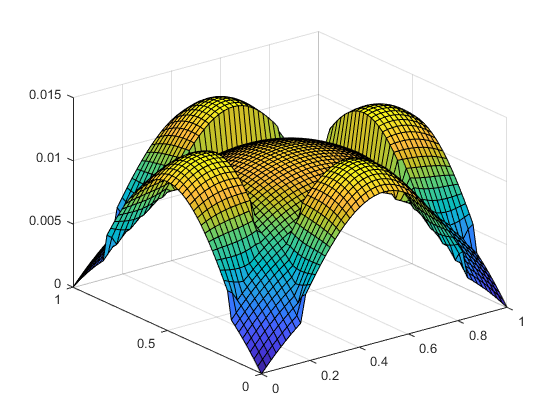}
         \caption{The forcing term $F$}
         \label{forcing}
     \end{subfigure}
        \caption{ Regions $\Omega_1$ and $\Omega_2$ within the domain, along with the plasticity function $g(|\nabla u_{ex}|^2)$ and the forcing term $F$ associated to the exact solution $u_{ex}$ (Test 1).}
        \label{par}
\end{figure}
The numerical application of Algorithm \ref{solving-direct} is further evidenced by the convergence behavior as shown in Figure \ref{convergence}, which presents the variations of the norms $\|u^{(n+1)}-u^{(n)}\|_{H^1}$ and $\|u_{ex}-u^{(n)}\|_{H^1}$ with respect to iteration index $n$. Specifically, Figures \ref{conv_his} and \ref{err} show these variations, respectively, affirming the algorithm’s iterative precision.
\begin{figure}[H]
     \centering
     \begin{subfigure}[b]{0.4\textwidth}
         \centering
         \includegraphics[width=\textwidth]{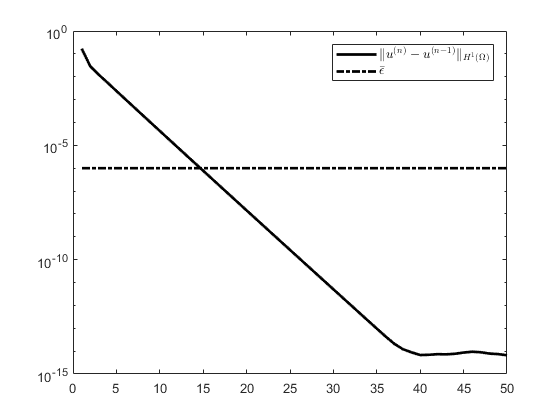}
         \caption{Norm $\|u^{(n+1)}-u^{(n)}\|_{H^1}$}
         \label{conv_his}
     \end{subfigure}
     \hfill
     \begin{subfigure}[b]{0.4\textwidth}
         \centering
         \includegraphics[width=\textwidth]{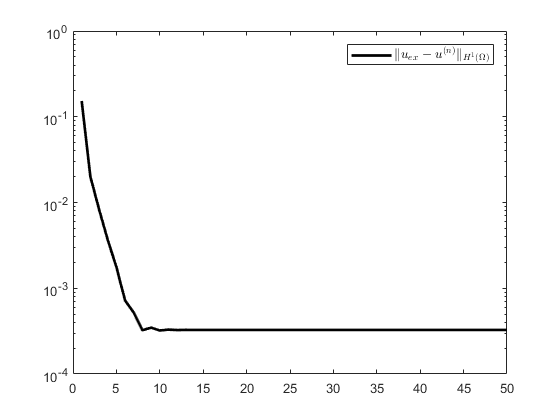}
         \caption{Norm $\|u_{ex}-u^{(n)}\|_{H^1}$}
         \label{err}
     \end{subfigure}
        \caption{Convergence analysis of the iterative process: Norms of successive iterations and error with respect to the exact solution (Test 1).}
        \label{convergence}
\end{figure}
Figure \ref{par2} shows the variation of the numerically approximated solution as well as the absolute errors, highlighting the algorithm's effectiveness and precision.
\begin{figure}[H]
     \centering
     \begin{subfigure}[b]{0.4\textwidth}
         \centering
         \includegraphics[width=\textwidth]{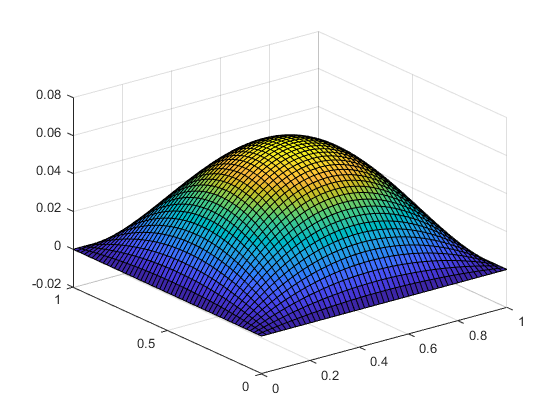}
         \caption{Approximated solution $u^\star$}
         \label{app}
     \end{subfigure}
     \hfill
     \begin{subfigure}[b]{0.4\textwidth}
         \centering
         \includegraphics[width=\textwidth]{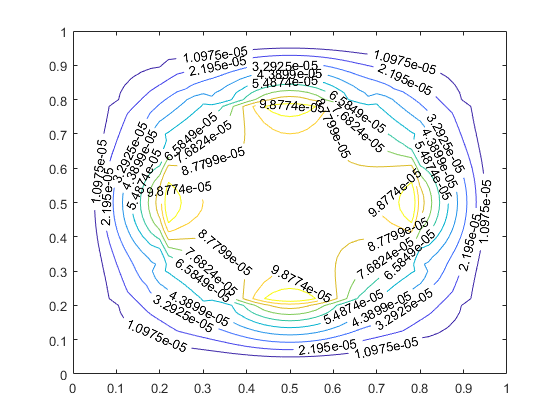}
         \caption{Absolute errors}
         \label{absolute}
     \end{subfigure}
        \caption{The numerically approximated solution alongside the absolute errors  (Test 1).}
        \label{par2}
\end{figure}

\noindent {\bf Test 2:} In this example, the plasticity function \( g(|\nabla u^2) \) is taken as follows:
$$
g(|\nabla u|^2) := \frac{1}{1 + |\nabla u|^2}.
$$
This particular choice of the plasticity function is inspired by image processing, see \cite{perona1990scale}.  Figure \ref{plasticity-ex2} and Figure \ref{forcing-ex2} illustrate the variations in the plasticity function and the source term, respectively.

\begin{figure}[H]
     \centering
     \begin{subfigure}[b]{0.4\textwidth}
         \centering
         \includegraphics[width=\textwidth]{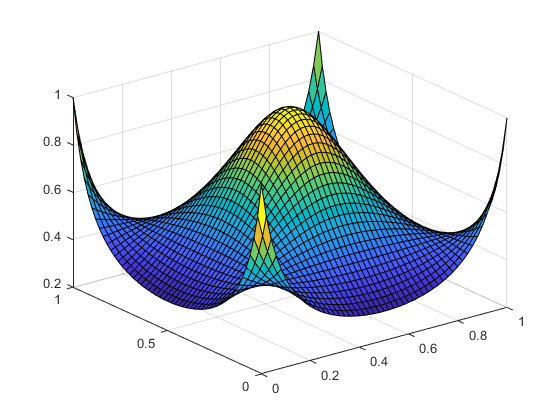}
         \caption{The plasticity function $g$}
         \label{plasticity-ex2}
     \end{subfigure}
     \hfill
     \begin{subfigure}[b]{0.4\textwidth}
         \centering
         \includegraphics[width=\textwidth]{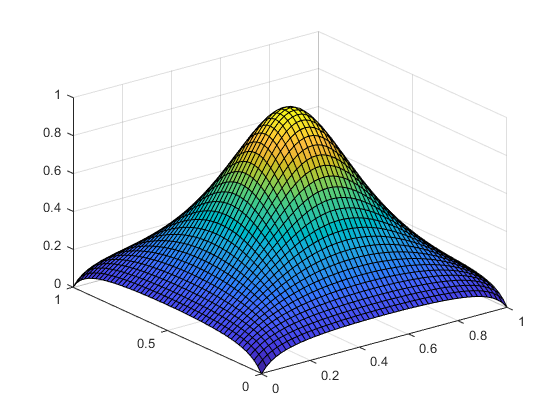}
         \caption{The forcing term $F$}
         \label{forcing-ex2}
     \end{subfigure}
        \caption{ Variations of the plasticity function $g(|\nabla u_{ex}|^2)$ and the forcing term $F$ associated to the exact solution $u_{ex}$  (Test 2).}
        \label{par00}
\end{figure}
Similar to the previous test, we illustrate the efficiency of our proposed algorithm in Figures \ref{conv_succ} and \ref{conv_exa} by plotting the variations of the norms $\|u^{(n+1)}-u^{(n)}\|_{H^1}$ and $\|u_{ex}-u^{(n)}\|_{H^1}$, respectively, with respect to the iteration indices \( n \).
\begin{figure}[H]
     \centering
     \begin{subfigure}[b]{0.4\textwidth}
         \centering
         \includegraphics[width=\textwidth]{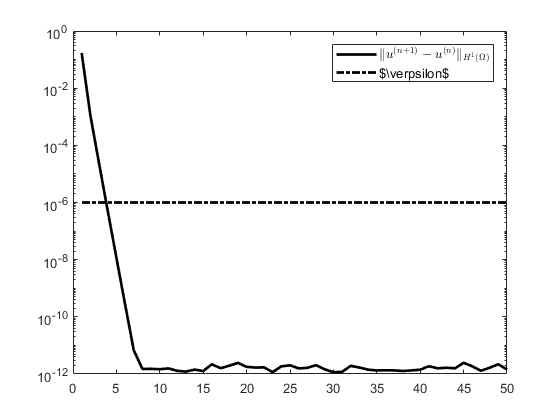}
         \caption{Norm $\|u^{(n+1)}-u^{(n)}\|_{H^1}$}
         \label{conv_succ}
     \end{subfigure}
     \hfill
     \begin{subfigure}[b]{0.4\textwidth}
         \centering
         \includegraphics[width=\textwidth]{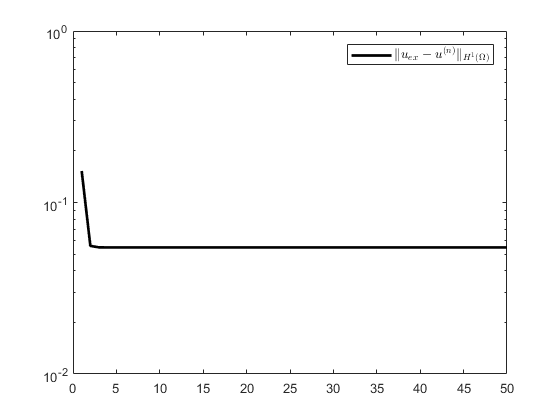}
         \caption{Norm $\|u_{ex}-u^{(n)}\|_{H^1}$}
         \label{conv_exa}
     \end{subfigure}
        \caption{Convergence analysis of the iterative process: Norms of successive iterations and error with respect to the exact solution  (Test 2).}
        \label{convergence2}
\end{figure}

Figure \ref{parex2} indicates the variations in the approximated solution together with the absolute errors, emphasizing the effectiveness and accuracy of the algorithm.

\begin{figure}[H]
     \centering
     \begin{subfigure}[b]{0.4\textwidth}
         \centering
         \includegraphics[width=\textwidth]{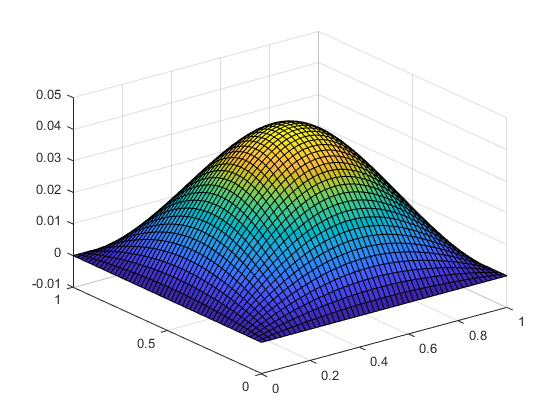}
         \caption{Approximated solution $u^\star$}
         \label{app}
     \end{subfigure}
     \hfill
     \begin{subfigure}[b]{0.4\textwidth}
         \centering
         \includegraphics[width=\textwidth]{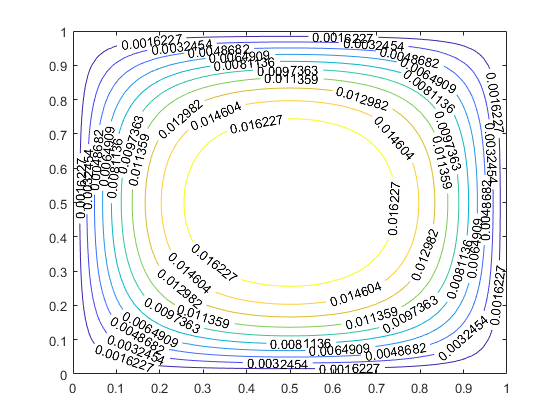}
         \caption{Absolute errors}
         \label{absolute}
     \end{subfigure}
        \caption{The numerically approximated solution alongside the absolute errors (Test 2).}
        \label{parex2} 
\end{figure}
In conclusion, the effectiveness and accuracy of Algorithm \ref{solving-direct} in solving the direct problem (\ref{govequation}) numerically have been rigorously evaluated and demonstrated through the above tests. The findings depicted in Figures \ref{convergence}-\ref{par2} and \ref{convergence2}-\ref{parex2} offer compelling evidence of the algorithm's robust performance. Specifically, Figures \ref{convergence} and \ref{convergence2} illustrate a consistent decrease in the norm $\|u^{(n+1)}-u^{(n)}\|_{H^1}$ with each iteration, indicating rapid convergence to the solution. Similarly, the error measure $\|u_{ex}-u^{(n)}\|_{H^1}$ shown in the same figure highlights the diminishing discrepancy between the  approximated solution and the exact solution, emphasizing the algorithm’s precision. Further affirmation of the algorithm's capability is observed in Figures \ref{par2} and \ref{parex2}, where the approximated solutions and the absolute errors are given. The close alignment between these solutions, coupled with the minimal absolute errors, confirms the high level of accuracy achieved by Algorithm \ref{solving-direct}.

\subsection{Reconstruction Results for the Inverse Problem}
In this subsection, we focus on demonstrating the effectiveness of Algorithm \ref{algo1} in solving the considered inverse problem. Specifically, we apply the algorithm to estimate three key parameters simultaneously: the strain hardening exponent $\kappa$, the yield stress $\xi_0^2$, and the shear modulus $G$. To verify the accuracy of our numerical results, we compute the approximate relative errors as follows:
\[
e_{\kappa_n} = \frac{|\overline{\kappa}_n - \kappa^\dagger|}{|\kappa^\dagger|}, \quad e_{\xi_{0,n}^2} = \frac{|\overline{\xi}_{0,n}^2 - \xi_0^{2,\dagger}|}{|\xi_0^{2,\dagger}|}, \quad e_{G_n} = \frac{|\overline{G}_n - G^\dagger|}{|G^\dagger|}, \quad e_n =\frac{ \|\overline{\theta}_n - \theta^\dagger\|}{\| \theta^\dagger\|},
\]
where $\overline{\theta}_n = (\overline{\kappa}_n, \overline{\xi}_{0,n}^2, \overline{G}_n)$ represents the mean values at the $n$-th iteration in {\bf IREKM}, $\theta^\dagger = (\kappa^\dagger, \xi_0^{2,\dagger}, G^\dagger)$ indicates the exact solution of the inverse problem and $\|\cdot\|$ denotes the Euclidean norm. We define the residual at each iteration $n$ as:
\[
R_n = \|C^{-\frac{1}{2}}(d - \overline{w}_n)\|.
\]

A critical component of iterative algorithms is the implementation of an appropriate stopping rule. We employ the stopping criterion from references \cite{zhang2018bayesian, iglesias2015iterative, iglesias2016regularizing}, which is based on the following condition:
\begin{equation}\label{stop}
R_{n} \leq \tau \delta,   
\end{equation}
where $\tau \approx \frac{1}{\rho}$ (see \cite{iglesias2015iterative}), and $\delta = \|C^{-\frac{1}{2}}(d - \mathbb{F}(\theta^\dagger))\|$ is the noise level in the observational data $\mathcal{T}^\delta = (\mathcal{T}_1^\delta, \dots, \mathcal{T}_M^\delta)$. For our numerical experiments, the initial values $\overline{\kappa}_0$, $\overline{\xi}_{0,0}^2$, and $\overline{G}_{0}$ are randomly selected within predefined ranges, with $N_e = 200$ ensemble members participating. We conduct two test cases encountered in practical applications to test the algorithm's capability. These tests aim to reconstruct the specified material parameters under different conditions, demonstrating the algorithm’s potential for precise material behavior modeling under torsional stress. Our findings will provide insights into the algorithm's robustness and reliability in accurately reflecting the physical properties of materials. For each test, we consider the torsional stress at different angles of twist per unit length $\varphi_i$, for $i=1,\dots,M$. We define $\mathcal{M}_i$ as:
\[
\mathcal{M}_i:=\max_{(x,y)\in \Omega}|\nabla u(x,y;\varphi_i)|^2.
\]
If $\mathcal{M}_i$ exceeds $\xi_0^2$, the measurement data $\mathcal{T}_i$ corresponding to $\varphi_i$ is classified as plastic; otherwise, it is considered elastic. Each scenario tests the algorithm with various numbers and types of measurements. The parameters utilized in our numerical evaluations and their assigned values include setting the computational domain are  $\Omega = [0,1] \times [0,1]$,  $\Delta_x = \Delta_y = 0.02$ and configuring the parameter $\rho$ in Algorithm \ref{algo1} to $0.7$.\\

In the numerical examples below, two class of (soft and stiff) materials described by (\ref{curve}) with parameters $ \bigg(E = 110 \, Gpa \,  or \, \, G = \frac{110}{2 \times (1 + 0.3)}\approx 42.3 \, Gpa, \,  \xi_0^2 = 0.02  \bigg)$ and  $ \bigg(E = 210 \, Gpa \,  or \, \, G = \frac{210}{2 \times (1 + 0.3)}\approx 80.77 \, Gpa, \,  \xi_0^2 = 0.027  \bigg)$ are considered. \\

\vspace{0.2cm}

\noindent \hypertarget{ex1}{{\bf Example 1: Simultaneous determination of the parameters for soft engineering materials.}} In this example, we evaluate the effectiveness of the proposed algorithm for identifying the unknown material parameters in soft materials. We run the algorithm for two different values of $\kappa $ : $\kappa = 0.3$ and $\kappa = 0.7$. 
The probability distributions are set as $\mathbb{P}(\kappa)=U(0.2, 0.9)$, $\mathbb{P}(\xi_0^2)=U(0, 0.15)$, and $\mathbb{P}(G)=U(42, 43)$. For these cases, we analyze the behavior under various angles of twist per unit length $\varphi_i$ (for $i=1,\dots,m$), which are critical for assessing the performance of our algorithm. The selected angles and their corresponding maximum stress values $\mathcal{M}_i$ are given in Table \ref{masex1}.
\begin{table}[H]
    \centering
    \begin{tabular}{ |c|c|c|c|c|c|  }
\hline
\multicolumn{2}{|c|}{Angle $\varphi$}&$\varphi_1=1$&$\varphi_2=0.5$&$\varphi_3=0.1$&$\varphi_4=0.005$ \\
\hline
{\bf Case 1}&$\mathcal{M}_i(\varphi)$&$2.5236$ &$1.3580$&$0.3221$&$0.0175$ \\
\hline
{\bf Case 2}&$\mathcal{M}_i(\varphi)$&$62.473$ &$21.693$&$1.8607$&$0.0175$\\
\hline
\end{tabular}
    \caption{Stress values corresponding to different angles of twist for the two cases ({\bf Example 1})}
    \label{masex1}
\end{table}
To thoroughly test the algorithm, we vary both the number and types of measurements across the two cases. We use different combinations of measurement types—plastic(P) and elastic(E)—across various numbers of angles to explore the algorithm’s adaptability and accuracy in different testing environments. The configurations of these tests are summarized in Table \ref{choixex1}.

\begin{table}[H]
    \centering
    \begin{tabular}{ |c|c|c|c|c|c|c|  }
\hline
$M$&$1$&\multicolumn{2}{|c|}{$2$}&\multicolumn{2}{|c|}{$3$}&$4$ \\
\hline
{\bf Angles}&$\varphi_1$ &$\varphi_1$ \& $\varphi_4$&$\varphi_1$ \& $\varphi_2$&$\varphi_i$, $i=1,2,4$& $\varphi_i$, $i=1,2,3$ &  $\varphi_i$, $i=1,\dots,4$ \\
\hline
{\bf Type} & 1 P  &1 P \& 1 E &2 P &2 P \& 1 E& 3 P &3 P \& 1 E\\
\hline
\end{tabular}
    \caption{Measurement configurations for the {\bf Example 1}}
    \label{choixex1}
\end{table}
The numerical results for the parameters $\theta=(\kappa, \xi_0^2, G)$ in {\bf Example 1} are presented in two scenarios. For {\it Case 1}, the results are detailed in Table \ref{accuracyex1case1}, and for {\it Case 2}, the results are shown in Table \ref{accuracyex1case2}.

  \begin{table}[H]
\small
\centering
\begin{tabular}{|p{1.1cm}|c|c|c|c|c|c|c|}
\hline
Data &$\sigma$& $\overline{\theta}_n$&$e_n$ &$e_{\kappa_n}$& $e_{\xi_{0,n}^2}$ &$e_{G_n}$&$n$\\
\hline
\multirow{ 3}{5em} {\bf 1 Plastic} & $0.0001$ &$(0.227,0.0933,42.53)$&$5.96\mathrm{e}{-3}$&$2.41\mathrm{e}{-1}$&$3.66$&$5.54\mathrm{e}{-3}$&$22$\\
& $0.001$ &$(0.227,0.0933,42.54)$&$6.17\mathrm{e}{-3}$&$2.42\mathrm{e}{-1}$&$3.66$&$5.70\mathrm{e}{-3}$&$21$\\
& $0.01$ &$(0.226,0.0936,42.54)$&$6.18\mathrm{e}{-3}$&$2.45\mathrm{e}{-1}$&$3.68$&$5.72\mathrm{e}{-3}$&$18$\\
\hhline{|=|=|=|=|=|=|=|=|}
\multirow{ 3}{5em} {\bf 1 Plastic  \hspace*{0.5cm} \& \; \; \; \; \;  1 Elastic} & $0.0001$ &$(0.236,0.0801,42.44)$&$3.90\mathrm{e}{-3}$&$2.11\mathrm{e}{-1}$&$3.01$&$3.34\mathrm{e}{-3}$&$21$\\
& $0.001$ &$(0.235,0.0795,42.44)$&$3.91\mathrm{e}{-3}$&$2.15\mathrm{e}{-1}$&$2.97$&$3.36\mathrm{e}{-3}$&$16$\\
& $0.01$ &$(0.231,0.0796,42.44)$&$3.95\mathrm{e}{-3}$&$2.27\mathrm{e}{-1}$&$2.98$&$3.34\mathrm{e}{-3}$&$11$\\
\hhline{|=|=|=|=|=|=|=|=|}
\multirow{ 3}{5em} {\bf 2 Plastic } & $0.0001$ &$(0.300,0.0199,42.34)$&$9.45\mathrm{e}{-4}$&$4.90\mathrm{e}{-4}$&$4.53\mathrm{e}{-3}$&$1.09\mathrm{e}{-3}$&$44$\\
& $0.001$ &$(0.303,0.0188,42.38)$&$1.89\mathrm{e}{-3}$&$1.30\mathrm{e}{-2}$&$5.74\mathrm{e}{-2}$&$1.93\mathrm{e}{-3}$&$30$\\
& $0.01$ &$(0.322,0.0147,42.42)$&$2.88\mathrm{e}{-3}$&$7.50\mathrm{e}{-2}$&$2.64\mathrm{e}{-1}$&$2.94\mathrm{e}{-3}$&$20$\\
\hhline{|=|=|=|=|=|=|=|=|}
\multirow{ 3}{5em} {\bf 2 Plastic  \hspace*{0.5cm} \& \; \; \; \; \;  1 Elastic} & $0.0001$ &$(0.300,0.0198,42.33)$&$7.09\mathrm{e}{-4}$&$9.60\mathrm{e}{-4}$&$5.95\mathrm{e}{-3}$&$8.50\mathrm{e}{-4}$&$37$\\
& $0.001$ &$(0.308,0.0190,42.39)$&$2.13\mathrm{e}{-3}$&$2.98\mathrm{e}{-2}$&$4.64\mathrm{e}{-2}$&$2.14\mathrm{e}{-3}$&$25$\\
& $0.01$ &$(0.314,0.0175,42.42)$&$2.85\mathrm{e}{-3}$&$4.81\mathrm{e}{-2}$&$1.22\mathrm{e}{-1}$&$2.87\mathrm{e}{-3}$&$16$\\
\hhline{|=|=|=|=|=|=|=|=|}
\multirow{ 3}{5em} {\bf 3 Plastic } & $0.0001$ &$(0.300,0.0199,42.35)$&$1.18\mathrm{e}{-3}$&$2.70\mathrm{e}{-4}$&$3.12\mathrm{e}{-3}$&$1.31\mathrm{e}{-3}$&$34$\\
& $0.001$ &$(0.302,0.0194,42.37)$&$1.65\mathrm{e}{-3}$&$9.53\mathrm{e}{-3}$&$2.69\mathrm{e}{-2}$&$1.89\mathrm{e}{-3}$&$24$\\
& $0.01$ &$(0.310,0.0193,42.40)$&$2.37\mathrm{e}{-3}$&$3.50\mathrm{e}{-2}$&$3.39\mathrm{e}{-2}$&$2.49\mathrm{e}{-3}$&$18$\\
\hhline{|=|=|=|=|=|=|=|=|}
\multirow{ 3}{5em} {\bf 3 Plastic  \hspace*{0.5cm} \& \; \; \; \; \;  1 Elastic} & $0.0001$ &$(0.300,0.0199,42.32)$&$7.01\mathrm{e}{-4}$&$1.94\mathrm{e}{-4}$&$1.63\mathrm{e}{-3}$&$7.01\mathrm{e}{-4}$&$36$\\
& $0.001$ &$(0.302,0.0195,42.36)$&$1.57\mathrm{e}{-3}$&$7.07\mathrm{e}{-3}$&$2.25\mathrm{e}{-2}$&$1.57\mathrm{e}{-3}$&$24$\\
& $0.01$ &$(0.308,0.0190,42.41)$&$2.61\mathrm{e}{-3}$&$2.91\mathrm{e}{-2}$&$4.56\mathrm{e}{-2}$&$2.59\mathrm{e}{-3}$&$18$\\
\hline
\end{tabular}
\caption{The numerical results for $\theta=(\kappa, \xi_0^2, G)$ with different noises $\sigma$ in {\bf Example 1} ({\it Case 1})}
\label{accuracyex1case1}
\end{table}

\begin{table}[H]
\small
\centering
\begin{tabular}{|p{1.1cm}|c|c|c|c|c|c|c|}
\hline
Data &$\sigma$& $\overline{\theta}_n$&$e_n$ &$e_{\kappa_n}$& $e_{\xi_{0,n}^2}$ &$e_{G_n}$&$n$\\
\hline
\multirow{ 3}{5em} {\bf 1 Plastic} & $0.0001$ &$(0.634,0.0946,42.54)$&$6.14\mathrm{e}{-3}$&$9.41\mathrm{e}{-2}$&$3.73$&$5.70\mathrm{e}{-3}$&$24$\\
& $0.001$ &$(0.633,0.0947,42.55)$&$6.36\mathrm{e}{-3}$&$9.46\mathrm{e}{-2}$&$3.73$&$6.05\mathrm{e}{-3}$&$23$\\
& $0.01$ &$(0.632,0.0954,42.55)$&$6.37\mathrm{e}{-3}$&$9.57\mathrm{e}{-2}$&$3.77$&$6.10\mathrm{e}{-3}$&$20$\\
\hhline{|=|=|=|=|=|=|=|=|}
\multirow{ 3}{5em} {\bf 1 Plastic  \hspace*{0.5cm} \& \; \; \; \; \;  1 Elastic} & $0.0001$ &$(0.640,0.0830,42.42)$&$3.50\mathrm{e}{-3}$&$8.43\mathrm{e}{-2}$&$3.15$&$3.02\mathrm{e}{-3}$&$23$\\
& $0.001$ &$(0.639,0.0836,42.43)$&$3.71\mathrm{e}{-3}$&$8.59\mathrm{e}{-2}$&$3.18$&$3.14\mathrm{e}{-3}$&$18$\\
& $0.01$ &$(0.635,0.0850,42.43)$&$3.76\mathrm{e}{-3}$&$9.17\mathrm{e}{-2}$&$3.25$&$3.12\mathrm{e}{-3}$&$13$\\
\hhline{|=|=|=|=|=|=|=|=|}
\multirow{ 3}{5em} {\bf 2 Plastic} & $0.0001$ &$(0.699,0.0203,42.24)$&$1.41\mathrm{e}{-3}$&$5.50\mathrm{e}{-4}$&$1.81\mathrm{e}{-2}$&$1.31\mathrm{e}{-3}$&$31$\\
& $0.001$ &$(0.689,0.0260,42.37)$&$1.68\mathrm{e}{-3}$&$1.50\mathrm{e}{-2}$&$2.36$&$1.70\mathrm{e}{-3}$&$20$\\
& $0.01$ &$(0.647,0.0672,42.42)$&$3.29\mathrm{e}{-3}$&$7.49\mathrm{e}{-2}$&$8.25\mathrm{e}{-1}$&$3.00\mathrm{e}{-3}$&$14$\\
\hhline{|=|=|=|=|=|=|=|=|}
\multirow{ 3}{5em} {\bf 2 Plastic  \hspace*{0.5cm} \& \; \; \; \; \;  1 Elastic} & $0.0001$ &$(0.699,0.0203,42.26)$&$9.45\mathrm{e}{-4}$&$3.27\mathrm{e}{-4}$&$1.81\mathrm{e}{-2}$&$1.12\mathrm{e}{-3}$&$32$\\
& $0.001$ &$(0.694,0.0227,42.37)$&$1.66\mathrm{e}{-3}$&$6.22\mathrm{e}{-3}$&$1.37\mathrm{e}{-1}$&$1.47\mathrm{e}{-3}$&$20$\\
& $0.01$ &$(0.678,0.0348,42.42)$&$2.90\mathrm{e}{-3}$&$3.11\mathrm{e}{-2}$&$7.44\mathrm{e}{-1}$&$2.72\mathrm{e}{-3}$&$13$\\
\hhline{|=|=|=|=|=|=|=|=|}
\multirow{ 3}{5em} {\bf 3 Plastic } & $0.0001$ &$(0.699,0.0203,42.32)$&$4.73\mathrm{e}{-4}$&$2.50\mathrm{e}{-4}$&$1.53\mathrm{e}{-2}$&$2.90\mathrm{e}{-4}$&$35$\\
& $0.001$ &$(0.694,0.0221,42.35)$&$1.19\mathrm{e}{-3}$&$3.27\mathrm{e}{-3}$&$1.05\mathrm{e}{-1}$&$1.01\mathrm{e}{-3}$&$24$\\
& $0.01$ &$(0.681,0.0270,42.38)$&$1.95\mathrm{e}{-3}$&$1.67\mathrm{e}{-2}$&$3.54\mathrm{e}{-1}$&$1.78\mathrm{e}{-3}$&$17$\\
\hhline{|=|=|=|=|=|=|=|=|}
\multirow{ 3}{5em} {\bf 3 Plastic  \hspace*{0.5cm} \& \; \; \; \; \;  1 Elastic} & $0.0001$ &$(0.699,0.0200,42.31)$&$2.37\mathrm{e}{-4}$&$1.50\mathrm{e}{-4}$&$1.11\mathrm{e}{-3}$&$5.45\mathrm{e}{-5}$&$31$\\
 & $0.001$ &$(0.695,0.0219,42.35)$&$1.18\mathrm{e}{-3}$&$3.22\mathrm{e}{-3}$&$9.96\mathrm{e}{-2}$&$1.38\mathrm{e}{-3}$&$21$\\
& $0.01$ &$(0.688,0.0269,42.38)$&$1.91\mathrm{e}{-3}$&$1.18\mathrm{e}{-2}$&$3.48\mathrm{e}{-1}$&$1.69\mathrm{e}{-3}$&$15$\\
\hline
\end{tabular}
\caption{The numerical results for $\theta=(\kappa, \xi_0^2, G)$ with different noises $\sigma$ in {\bf Example 1} ({\it Case 2})}
\label{accuracyex1case2}
\end{table}
In the following, we investigate the effectiveness of the iterative regularizing ensemble Kalman method ({\bf IREKM}). In Figure \ref{meansEX1}, we show the approximate solutions for $\theta$ in {\bf Example 1} for case 1 with four measured data ($\varphi_i, 1=1,\dots,4$) and different noise level 
$\sigma$. It can be seen that this example can be recovered very well by using {\bf IREKM}. The logarithmic relative errors
and residuals for this case are shown in Figures \ref{errorsEX1} and \ref{convex1}, we can see that the stopping rule \eqref{stop} is suitable for our inverse problem.
\begin{figure}[H]
     \centering
     \begin{subfigure}[b]{0.3\textwidth}
         \centering
         \includegraphics[width=\textwidth]{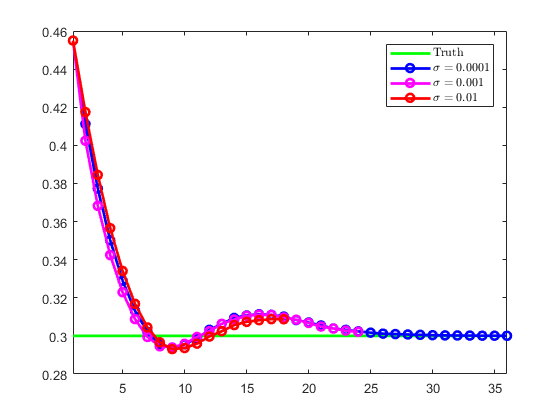}
         \caption{The ensemble means $\overline{\kappa}_n$}
         \label{meank}
     \end{subfigure}
     \hfill
     \begin{subfigure}[b]{0.3\textwidth}
         \centering
         \includegraphics[width=\textwidth]{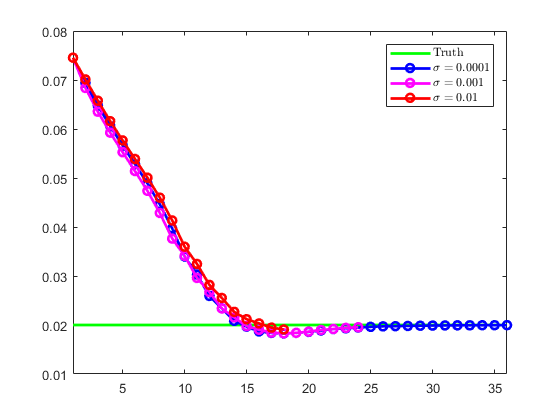}
         \caption{The ensemble means $\overline{\xi}_{0,n}^2$}
         \label{meanT}
     \end{subfigure}
     \hfill
     \begin{subfigure}[b]{0.3\textwidth}
         \centering
         \includegraphics[width=\textwidth]{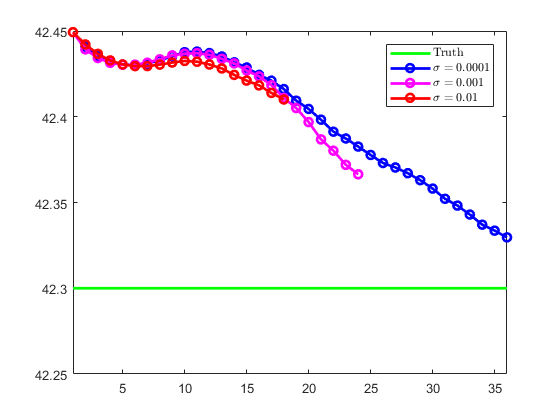}
         \caption{The ensemble means $\overline{G}_n$}
         \label{meanG}
     \end{subfigure}
        \caption{ The ensemble means of $\theta_n$ ({\bf Example 1} - {\it Case 1} - "$M=4$")}
        \label{meansEX1}
\end{figure}

\begin{figure}[H]
     \centering
     \begin{subfigure}[b]{0.3\textwidth}
         \centering
         \includegraphics[width=\textwidth]{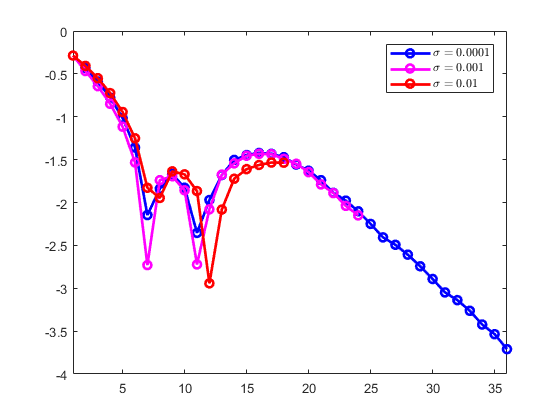}
         \caption{$\log_{10}(e_{\kappa_{n}})$}
         \label{errk}
     \end{subfigure}
     \hfill
     \begin{subfigure}[b]{0.3\textwidth}
         \centering
         \includegraphics[width=\textwidth]{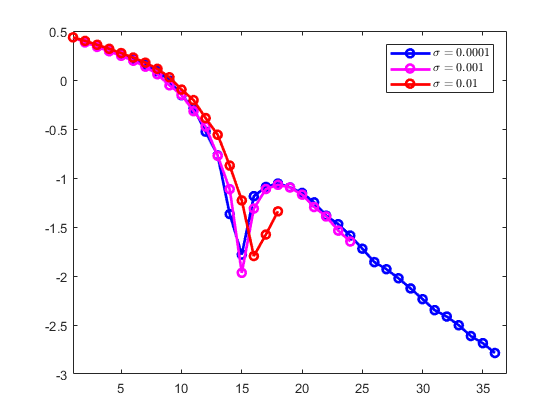}
         \caption{$\log_{10}(e_{\xi_{0,n}^2})$}
         \label{errT}
     \end{subfigure}
     \hfill
     \begin{subfigure}[b]{0.3\textwidth}
         \centering
         \includegraphics[width=\textwidth]{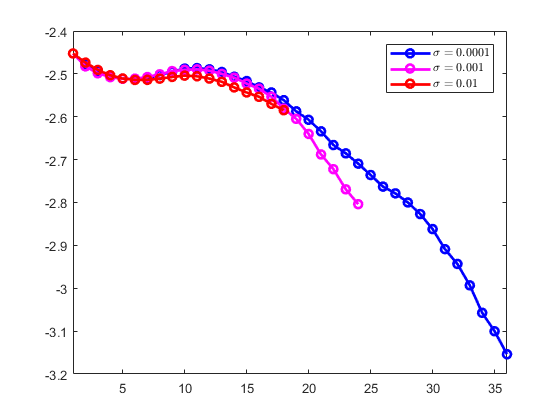}
         \caption{$\log_{10}(e_{G_{n}})$}
         \label{errG}
     \end{subfigure}
        \caption{ Logarithmic relative errors for each unknown term with various noise levels \(\sigma\) ({\bf Example 1} - {\it Case 1} - "$M=4$")}
        \label{errorsEX1}
\end{figure}

\begin{figure}[H]
     \centering
     \begin{subfigure}[b]{0.4\textwidth}
         \centering
         \includegraphics[width=\textwidth]{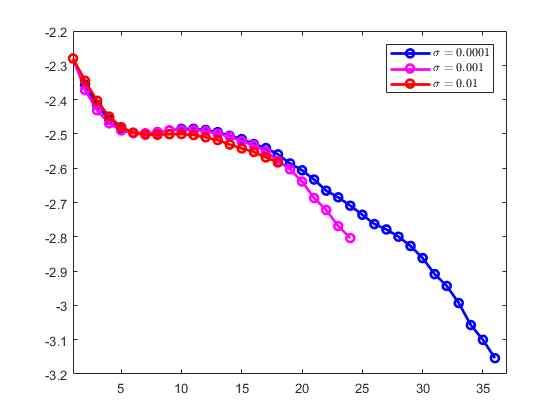}
         \caption{$\log_{10}(e_n)$}
         \label{globex1}
     \end{subfigure}
     \hfill
     \begin{subfigure}[b]{0.4\textwidth}
         \centering
         \includegraphics[width=\textwidth]{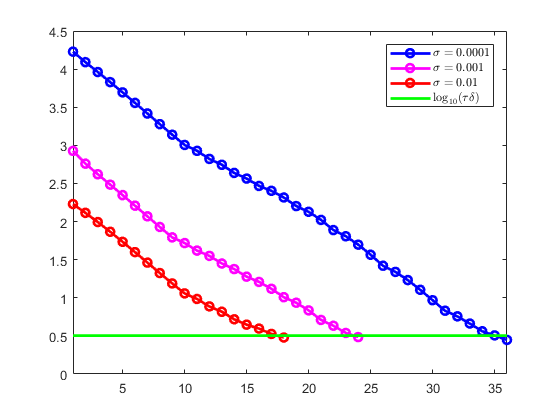}
         \caption{$\log_{10}(R_n)$}
         \label{residuex1}
     \end{subfigure}
        \caption{Logarithmic iteration global relative errors and residuals with various noise levels \(\sigma\) ({\bf Example 1} - {\it Case 1} - "$M=4$")}
        \label{convex1}
\end{figure}

\noindent \hypertarget{ex2}{{\bf Example 2: Simultaneous determination of the parameters for stiff engineering materials.}} In this example, we evaluate the effectiveness of the proposed algorithm for identifying the unknown material parameters in stiff materials. We run the algorithm for two different values of $\kappa $ : $\kappa = 0.3$ and $\kappa = 0.7$. The probability distributions for $\kappa$ and $ {\xi_0}^2$ remain same with the previous test, while $\mathbb{P}(G)=U(80, 81)$. We analyze the impact of the changes in the shift modulus through the algorithm’s responses to various predefined angles of twist $\varphi_i$, with maximum stress values $\mathcal{M}_i$ for each case detailed in Table~\ref{masex2}. 
\begin{table}[H]
    \centering
    \begin{tabular}{ |c|c|c|c|c|c|  }
\hline
\multicolumn{2}{|c|}{Angle $\varphi$}&$\varphi_1=1$&$\varphi_2=0.5$&$\varphi_3=0.1$&$\varphi_4=0.003$ \\
\hline
{\bf Case 1}&$\mathcal{M}(\varphi)$&$5.3118$ &$2.8583$&$0.6780$&$0.0229$ \\
\hline
{\bf Case 2}&$\mathcal{M}(\varphi)$&$179.99$ &$62.500$&$5.3610$&$0.0229$\\
\hline
\end{tabular}
    \caption{Stress values corresponding to different angles of twist for the two cases}
    \label{masex2}
\end{table}
Measurement setups similar to those in Example 1 are applied, with configurations shown in Table~\ref{choixex2}.
\begin{table}[H]
    \centering
    \begin{tabular}{ |c|c|c|c|c|c|c|  }
\hline
Number: $M$&$1$&\multicolumn{2}{|c|}{$2$}&\multicolumn{2}{|c|}{$3$}&$4$ \\
\hline
{\bf Angles}&$\varphi_1$ &$\varphi_1$ \& $\varphi_4$&$\varphi_1$ \& $\varphi_2$&$\varphi_i$, $i=1,2,4$& $\varphi_i$, $i=1,2,3$ &  $\varphi_i$, $i=1,\dots,4$ \\
\hline
{\bf Type} & 1 P  &1 P \& 1 E &2 P &2 P \& 1 E& 3 P &3 P \& 1 E\\
\hline
\end{tabular}
    \caption{Measurement configurations}
    \label{choixex2}
\end{table}
The numerical results for the parameters $\theta=(\kappa, \xi_0^2, G)$ in {\bf Example 2} are presented in Tables \ref{accuracyex2case1} and in Tables \ref{accuracyex2case2} for {\it Case 1} and  {\it Case 2}, respectively.

\begin{table}[H]
\small
\centering
\begin{tabular}{|p{1.1cm}|c|c|c|c|c|c|c|}
\hline
Data&$\sigma$& $\overline{\theta}_n$&$e_n$ &$e_{\kappa_n}$& $e_{\xi_{0,n}^2}$ &$e_{G_n}$&$n$\\
\hline
\multirow{ 3}{5em} {\bf 1 Plastic} & $0.0001$ &$(0.247,0.0962,80.62)$&$2.14\mathrm{e}{-3}$&$1.76\mathrm{e}{-1}$&$2.56$&$1.75\mathrm{e}{-3}$&$20$\\
& $0.001$ &$(0.247,0.0961,80.62)$&$2.14\mathrm{e}{-3}$&$1.75\mathrm{e}{-1}$&$2.56$&$1.75\mathrm{e}{-3}$&$17$\\
& $0.01$ &$(0.247,0.0960,80.62)$&$2.14\mathrm{e}{-3}$&$1.74\mathrm{e}{-1}$&$2.55$&$1.76\mathrm{e}{-3}$&$15$\\

\hhline{|=|=|=|=|=|=|=|=|}
\multirow{ 3}{5em} {\bf 1 Plastic  \hspace*{0.5cm} \& \; \; \; \; \;  1 Elastic} & $0.0001$ &$(0.275,0.0413,80.62)$&$1.89\mathrm{e}{-3}$&$8.10\mathrm{e}{-2}$&$5.32\mathrm{e}{-1}$&$1.73\mathrm{e}{-3}$&$39$\\
& $0.001$ &$(0.247,0.0949,80.63)$&$2.03\mathrm{e}{-3}$&$1.76\mathrm{e}{-1}$&$2.51$&$1.72\mathrm{e}{-3}$&$17$\\
& $0.01$ &$(0.231,0.0949,80.62)$&$2.21\mathrm{e}{-3}$&$2.27\mathrm{e}{-1}$&$2.51$&$1.73\mathrm{e}{-3}$&$11$\\
\hhline{|=|=|=|=|=|=|=|=|}
\multirow{ 3}{5em} {\bf 2 Plastic} & $0.0001$ &$(0.299,0.0271,80.58)$&$2.35\mathrm{e}{-3}$&$3.60\mathrm{e}{-4}$&$5.89\mathrm{e}{-3}$&$2.32\mathrm{e}{-3}$&$36$\\
& $0.001$ &$(0.296,0.0293,80.59)$&$2.22\mathrm{e}{-3}$&$1.29\mathrm{e}{-2}$&$8.80\mathrm{e}{-2}$&$2.13\mathrm{e}{-3}$&$24$\\
& $0.01$ &$(0.293,0.0311,80.56)$&$2.60\mathrm{e}{-3}$&$2.21\mathrm{e}{-2}$&$1.55\mathrm{e}{-1}$&$2.55\mathrm{e}{-3}$&$20$\\
\hhline{|=|=|=|=|=|=|=|=|}
\multirow{ 3}{5em} {\bf 2 Plastic  \hspace*{0.5cm} \& \; \; \; \; \;  1 Elastic} & $0.0001$ &$(0.299,0.0271,80.68)$&$1.11\mathrm{e}{-3}$&$3.30\mathrm{e}{-4}$&$4.67\mathrm{e}{-3}$&$1.02\mathrm{e}{-3}$&$33$\\
& $0.001$ &$(0.298,0.0285,80.66)$&$1.36\mathrm{e}{-3}$&$5.57\mathrm{e}{-3}$&$5.90\mathrm{e}{-2}$&$1.34\mathrm{e}{-3}$&$25$\\
& $0.01$ &$(0.290,0.0374,80.63)$&$1.74\mathrm{e}{-3}$&$3.18\mathrm{e}{-2}$&$3.88\mathrm{e}{-1}$&$1.61\mathrm{e}{-3}$&$18$\\
\hhline{|=|=|=|=|=|=|=|=|}
\multirow{ 3}{5em} {\bf 3 Plastic} & $0.0001$ &$(0.299,0.0270,80.70)$&$8.66\mathrm{e}{-4}$&$2.23\mathrm{e}{-4}$&$3.45\mathrm{e}{-3}$&$7.56\mathrm{e}{-4}$&$32$\\
& $0.001$ &$(0.298,0.0281,80.66)$&$1.36\mathrm{e}{-3}$&$4.10\mathrm{e}{-3}$&$4.28\mathrm{e}{-2}$&$1.25\mathrm{e}{-3}$&$24$\\
& $0.01$ &$(0.294,0.0330,80.65)$&$1.48\mathrm{e}{-3}$&$1.98\mathrm{e}{-2}$&$2.25\mathrm{e}{-1}$&$1.44\mathrm{e}{-3}$&$19$\\
\hhline{|=|=|=|=|=|=|=|=|}
\multirow{ 3}{5em} {\bf 3 Plastic  \hspace*{0.5cm} \& \; \; \; \; \;  1 Elastic} & $0.0001$ &$(0.299,0.0270,80.72)$&$6.19\mathrm{e}{-4}$&$4.40\mathrm{e}{-4}$&$3.45\mathrm{e}{-3}$&$5.74\mathrm{e}{-4}$&$38$\\
& $0.001$ &$(0.296,0.0291,80.67)$&$1.23\mathrm{e}{-3}$&$1.21\mathrm{e}{-2}$&$7.92\mathrm{e}{-2}$&$1.14\mathrm{e}{-3}$&$25$\\
& $0.01$ &$(0.282,0.0388,80.66)$&$1.38\mathrm{e}{-3}$&$5.83\mathrm{e}{-2}$&$4.37\mathrm{e}{-1}$&$1.26\mathrm{e}{-3}$&$18$\\
\hline
\end{tabular}
\caption{The numerical results for $\theta=(\kappa, \xi_0^2, G)$ with different noises $\sigma$ in {\bf Example 2} ({\it Case 1})}
\label{accuracyex2case1}
\end{table}

\begin{table}[H]
\small
\centering
\begin{tabular}{|p{1.1cm}|c|c|c|c|c|c|c|}
\hline
Data &$\sigma$& $\overline{\theta}_n$&$e_n$ &$e_{\kappa_n}$& $e_{\xi_{0,n}^2}$ &$e_{G_n}$&$n$\\
\hline
\multirow{3}{5em} {\bf 1 Plastic} & $0.0001$ &$(0.638,0.0845,80.62)$&$2.13\mathrm{e}{-3}$&$8.91\mathrm{e}{-2}$&$2.13$&$1.73\mathrm{e}{-3}$&$22$\\
& $0.001$ &$(0.658,0.1029,80.62)$&$2.14\mathrm{e}{-3}$&$5.91\mathrm{e}{-2}$&$2.81$&$1.73\mathrm{e}{-3}$&$20$\\
& $0.01$ &$(0.657,0.1037,80.62)$&$2.15\mathrm{e}{-3}$&$6.09\mathrm{e}{-2}$&$2.84$&$1.73\mathrm{e}{-3}$&$17$\\

\hhline{|=|=|=|=|=|=|=|=|}
\multirow{3}{5em} {\bf 1 Plastic \hspace*{0.5cm} \& \; \; \; \; \; 1 Elastic} & $0.0001$ &$(0.648,0.1024,80.62)$&$2.17\mathrm{e}{-3}$&$7.29\mathrm{e}{-2}$&$2.79$&$1.78\mathrm{e}{-3}$&$21$\\
& $0.001$ &$(0.647,0.1038,80.62)$&$2.18\mathrm{e}{-3}$&$7.43\mathrm{e}{-2}$&$2.94$&$1.78\mathrm{e}{-3}$&$18$\\
& $0.01$ &$(0.645,0.1067,80.62)$&$2.21\mathrm{e}{-3}$&$7.74\mathrm{e}{-2}$&$2.95$&$1.80\mathrm{e}{-3}$&$16$\\
\hhline{|=|=|=|=|=|=|=|=|}
\multirow{3}{5em} {\bf 2 Plastic } & $0.0001$ &$(0.699,0.0273,80.66)$&$1.36\mathrm{e}{-3}$&$3.11\mathrm{e}{-4}$&$1.44\mathrm{e}{-2}$&$1.29\mathrm{e}{-3}$&$35$\\
& $0.001$ &$(0.696,0.0301,80.61)$&$1.98\mathrm{e}{-3}$&$4.51\mathrm{e}{-3}$&$1.14\mathrm{e}{-1}$&$1.88\mathrm{e}{-3}$&$23$\\
& $0.01$ &$(0.677,0.0523,80.61)$&$2.02\mathrm{e}{-3}$&$3.19\mathrm{e}{-2}$&$9.37\mathrm{e}{-1}$&$2.20\mathrm{e}{-3}$&$17$\\
\hhline{|=|=|=|=|=|=|=|=|}
\multirow{3}{5em} {\bf 2 Plastic \hspace*{0.5cm} \& \; \; \; \; \; 1 Elastic} & $0.0001$ &$(0.699,0.0273,80.70)$&$8.66\mathrm{e}{-4}$&$3.31\mathrm{e}{-4}$&$1.21\mathrm{e}{-2}$&$7.63\mathrm{e}{-4}$&$35$\\
& $0.001$ &$(0.695,0.0317,80.63)$&$1.73\mathrm{e}{-3}$&$7.10\mathrm{e}{-3}$&$1.74\mathrm{e}{-1}$&$1.66\mathrm{e}{-3}$&$24$\\
& $0.01$ &$(0.677,0.0529,80.63)$&$1.78\mathrm{e}{-3}$&$3.24\mathrm{e}{-2}$&$9.59\mathrm{e}{-1}$&$1.72\mathrm{e}{-3}$&$17$\\
\hhline{|=|=|=|=|=|=|=|=|}
\multirow{3}{5em} {\bf 3 Plastic} & $0.0001$ &$(0.699,0.0272,80.70)$&$8.66\mathrm{e}{-4}$&$1.86\mathrm{e}{-4}$&$8.33\mathrm{e}{-3}$&$7.89\mathrm{e}{-4}$&$34$\\
& $0.001$ &$(0.0302,0.1739,80.65)$&$1.48\mathrm{e}{-3}$&$4.98\mathrm{e}{-3}$&$1.20\mathrm{e}{-1}$&$1.47\mathrm{e}{-3}$&$25$\\
& $0.01$ &$(0.683,0.0440,80.63)$&$1.75\mathrm{e}{-3}$&$2.36\mathrm{e}{-2}$&$6.31\mathrm{e}{-1}$&$1.66\mathrm{e}{-3}$&$18$\\
\hhline{|=|=|=|=|=|=|=|=|}
\multirow{3}{5em} {\bf 3 Plastic \hspace*{0.5cm} \& \; \; \; \; \; 1 Elastic} & $0.0001$ &$(0.699,0.0270,80.76)$&$1.03\mathrm{e}{-4}$&$1.12\mathrm{e}{-4}$&$3.07\mathrm{e}{-3}$&$1.03\mathrm{e}{-4}$&$38$\\
& $0.001$ &$(0.698,0.0287,80.68)$&$1.01\mathrm{e}{-3}$&$2.88\mathrm{e}{-3}$&$6.69\mathrm{e}{-2}$&$1.01\mathrm{e}{-3}$&$26$\\
& $0.01$ &$(0.689,0.0364,80.65)$&$1.37\mathrm{e}{-3}$&$1.44\mathrm{e}{-2}$&$3.49\mathrm{e}{-1}$&$1.36\mathrm{e}{-3}$&$20$\\
\hline
\end{tabular}
\caption{The numerical results for $\theta=(\kappa, \xi_0^2, G)$ with different noises $\sigma$ in {\bf Example 2} ({\it Case 2})}
\label{accuracyex2case2}
\end{table}
In Figure \ref{meansEX2}, we present the approximate solutions for $\theta$ in {\bf Example 2} for {\it Case 2}, incorporating four measured data points ($\varphi_i, i=1,\dots,4$) at various noise levels $\sigma$. This example demonstrates that {\bf IREKM} can effectively recover the desired outcomes. The logarithmic relative errors and residuals for this scenario are displayed in Figures \ref{errorsEX2} and \ref{convex2}, respectively, illustrating that the stopping criterion \eqref{stop} is well-suited for our inverse problem.

\begin{figure}[H]
     \centering
     \begin{subfigure}[b]{0.3\textwidth}
         \centering
         \includegraphics[width=\textwidth]{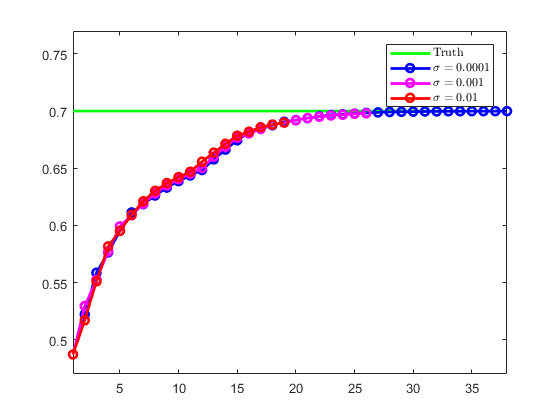}
         \caption{The ensemble means $\overline{\kappa}_n$}
         \label{meank2}
     \end{subfigure}
     \hfill
     \begin{subfigure}[b]{0.3\textwidth}
         \centering
         \includegraphics[width=\textwidth]{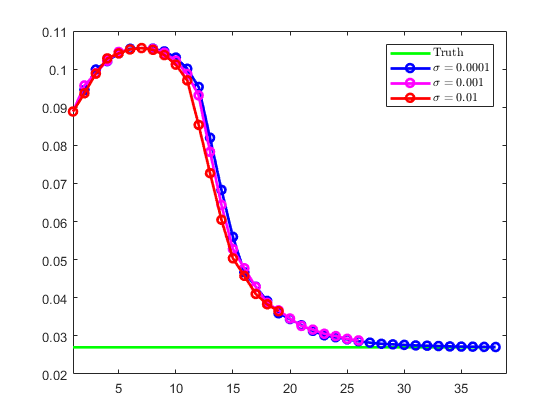}
         \caption{The ensemble means $\overline{\xi}_{0,n}^2$}
         \label{meanT2}
     \end{subfigure}
     \hfill
     \begin{subfigure}[b]{0.3\textwidth}
         \centering
         \includegraphics[width=\textwidth]{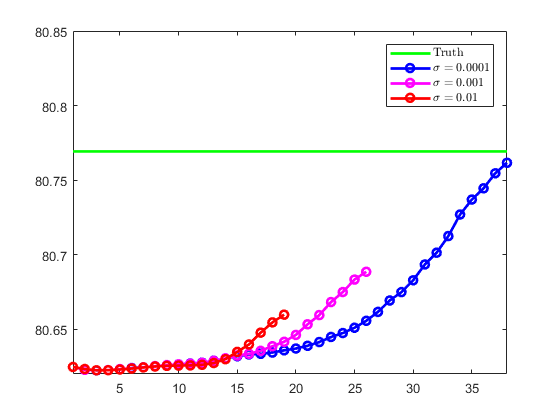}
         \caption{The ensemble means $\overline{G}_n$}
         \label{meanG2}
     \end{subfigure}
        \caption{ The ensemble means of $\theta_n$ ({\bf Example 2} - {\it Case 2} - "$M=4$")}
        \label{meansEX2}
\end{figure}

\begin{figure}[H]
     \centering
     \begin{subfigure}[b]{0.3\textwidth}
         \centering
         \includegraphics[width=\textwidth]{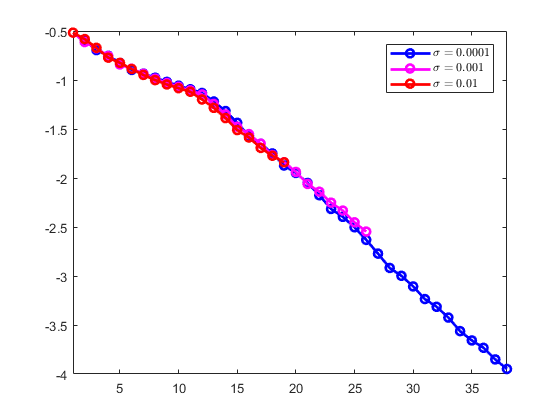}
         \caption{$\log_{10}(e_{\kappa_{n}})$}
         \label{errk2}
     \end{subfigure}
     \hfill
     \begin{subfigure}[b]{0.3\textwidth}
         \centering
         \includegraphics[width=\textwidth]{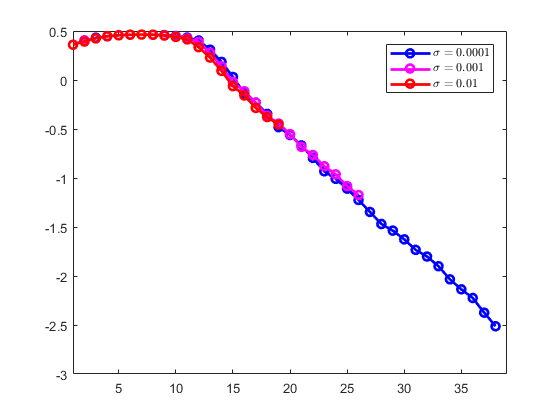}
         \caption{$\log_{10}(e_{\xi_{0,n}^2})$}
         \label{errT2}
     \end{subfigure}
     \hfill
     \begin{subfigure}[b]{0.3\textwidth}
         \centering
         \includegraphics[width=\textwidth]{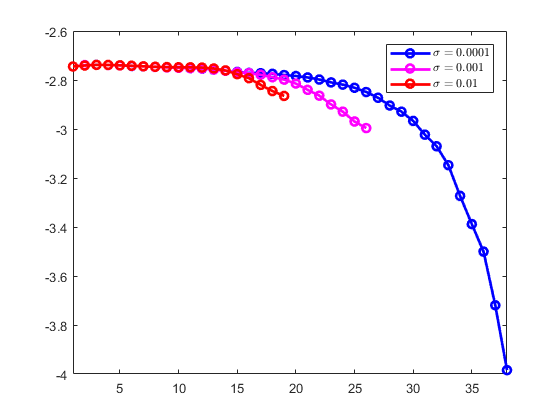}
         \caption{$\log_{10}(e_{G_{n}})$}
         \label{errG2}
     \end{subfigure}
        \caption{  Logarithmic relative errors for each unknown term with various noise levels \(\sigma\) ({\bf Example 2} - {\it Case 2} - "$M=4$")}
        \label{errorsEX2}
\end{figure}

\begin{figure}[H]
     \centering
     \begin{subfigure}[b]{0.4\textwidth}
         \centering
         \includegraphics[width=\textwidth]{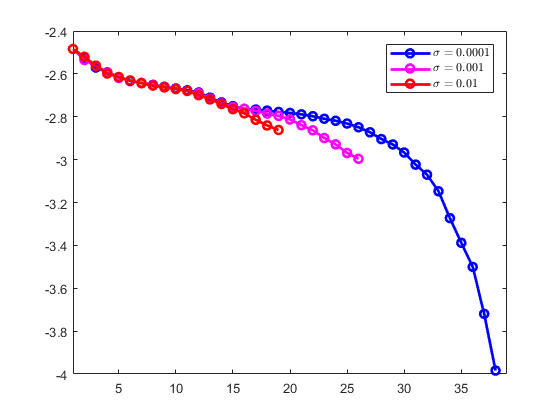}
         \caption{$\log_{10}(e_n)$}
         \label{globex2}
     \end{subfigure}
     \hfill
     \begin{subfigure}[b]{0.4\textwidth}
         \centering
         \includegraphics[width=\textwidth]{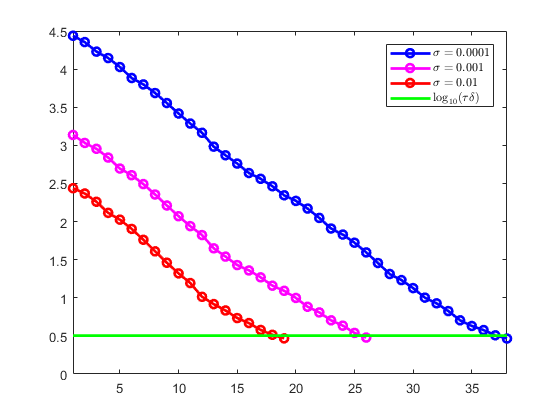}
         \caption{$\log_{10}(R_n)$}
         \label{residuex2}
     \end{subfigure}
        \caption{Logarithmic iteration global relative errors and residuals with various noise levels \(\sigma\) ({\bf Example 2} - {\it Case 2} - "$M=4$")}
        \label{convex2}
\end{figure}

{\bf Concluding remarks:} The numerical study outlined above allows us to draw the following conclusions:

\begin{itemize}
    \item The accuracy of our algorithm progressively improves with an increasing number of measurements, achieving optimal results when using four measurements.
    \item In terms of noise levels, the algorithm performs best with the lowest noise level tested, which is $0.0001$. As the noise level increases to $0.001$ and $0.01$, the accuracy becomes less but still acceptable. 
\end{itemize}

Based on the numerical experiments detailed above, it is evident that {\bf IREKM} is both effective and stable for addressing our inverse problem, which involves the simultaneous recovery of multiple parameters in the elastoplastic torsion of a strain-hardening bar.

\end{document}